\documentclass[11pt,reqno]{amsart}
\usepackage{amsmath,amsthm,amsfonts,amssymb,xcolor,enumitem,comment,hyperref}
\usepackage[numbers,sort&compress]{natbib}
\usepackage{graphicx}
\usepackage{setspace}
\usepackage{mathrsfs}
\usepackage{hyperref}
\usepackage[letterpaper,margin=1.3in]{geometry}

\usepackage{comment}

\setcounter{tocdepth}{2}

\newcommand{\scott}[1]{{\scriptsize\ \color{red}\textbf{Scott's note:} #1 \color{black}\ \normalsize}}

\renewcommand{\epsilon}{\varepsilon}
\renewcommand{\hat}{\widehat}

\def\XXint#1#2#3{{\setbox0=\hbox{$#1{#2#3}{\int}$}
     \vcenter{\hbox{$#2#3$}}\kern-.5\wd0}}

\newtheorem{theorem}{Theorem}[section]
\newtheorem*{theorem*}{Theorem}
\newtheorem{proposition}[theorem]{Proposition}
\newtheorem{lemma}[theorem]{Lemma}
\newtheorem{corollary}[theorem]{Corollary}

\theoremstyle{definition}
\newtheorem{definition}[theorem]{Definition}

\theoremstyle{remark}
\newtheorem{remark}[theorem]{Remark}

\numberwithin{equation}{section}

\newcommand{\bbG}{\mathbb{G}}
\newcommand{\bbR}{\mathbb{R}}

\title[Directional Pliability, Whitney, and Lusin]{Directional Pliability, Whitney Extension, and Lusin Approximation for Curves in Carnot Groups}

\author[Gareth Speight]{Gareth Speight}
\address{Department of Mathematical Sciences, University of Cincinnati, 2815 Commons Way, Cincinnati, OH 45221, United States}
\email{Gareth.Speight@uc.edu}

\author[Scott Zimmerman]{Scott Zimmerman}
\address{Department of Mathematics, 
The Ohio State University,
100 Math Tower, 231 West 18th Avenue, Columbus, OH 43210, United States}
\email{zimmerman.416@osu.edu}

\subjclass{53C17, 58C25}

\allowdisplaybreaks

\begin{document}

\begin{abstract}
We show that, in arbitrary Carnot groups, pliability in a subset of directions is sufficient to guarantee the existence of a Whitney-type extension and a Lusin approximation for curves with tangent vectors in the same set of directions. We apply this to show that every horizontal curve in the Engel group must intersect a $C^{1}$ horizontal curve in a set of positive measure.
\end{abstract}

\keywords{Whitney extension theorem, Lusin approximation, pliability, Engel group}

\date{\today}

\maketitle

\section{Introduction}

Lusin's Theorem shows that a measurable function coincides with a continuous function on its domain up to a set of arbitrarily small measure. More precisely, given a measurable function $f\colon \mathbb{R}\to \mathbb{R}$ and $\varepsilon>0$, there exists a continuous map $F\colon \mathbb{R}\to \mathbb{R}$ such that $$m\{x\in\bbR:F(x)\neq f(x)\}<\varepsilon.$$
This motivates the following question: given a map with particular regularity, does it coincide with a map of higher regularity up to a set of arbitrarily small measure?
Such an approximation is known as a ``Lusin approximation'' owing to Lusin's classical work.
Examples of Lusin approximations with higher notions of regularity include the approximation of absolutely continuous functions by $C^{1}$ functions and of $m$-approximately differentiable functions by $C^m$ functions for $m\geq 1$ \cite{Fed69, LT94, Menne}. One way to quickly construct these Lusin approximations is through the application of a Whitney-type extension theorem. Such a theorem gives hypotheses under which a function defined on some subset of a space can be extended to a smooth map defined on the whole space a la the classical Whitney Extension Theorem \cite{Whi34}. With this extension result in hand, one constructs a Lusin approximation of a nonsmooth map by verifying that the hypotheses of the extension theorem hold in a large enough subset of the domain, then the extension theorem provides the desired Lusin approximating map on the remaining part of the domain.

A large part of geometric analysis and geometric measure theory in Euclidean spaces has recently been studied in more general metric settings. One particularly rich setting is that of Carnot groups \cite{BLU07, Mon02}. Carnot groups are Lie groups whose Lie algebra admits a stratification in which the first layer generates the others via Lie brackets in a particular way (see Section~\ref{s-carnot} for definitions). The number of layers is called the {\em step} of the Carnot group. 
These groups model control theoretic scenarios in which motion through a space is restricted to a low dimensional space of directions.
Points in a Carnot group can be connected by horizontal curves. These are the absolutely continuous curves with tangents in the first layer and model the paths through which a body is allowed to travel. Every Carnot group can be equipped with a natural left-invariant distance, dilations, and a Haar measure which respects the algebraic structure. This all induces a rich geometry on the group and makes the study of analysis and geometry in Carnot groups highly appealing.

The present paper contributes to the study of Whitney extensions and Lusin approximations for horizontal curves in Carnot groups. This was first studied for horizontal curves in the Heisenberg group (a step two Carnot group) in \cite{Spe16}, where the first author showed that every horizontal curve coincides with a $C^{1}$ horizontal curve up to a set of arbitrarily small measure. In the same paper, it was shown that a similar result does not hold in the Engel group (a step three Carnot group). To be more precise, given any $\varepsilon>0$ there exists a horizontal curve $\gamma\colon [0,1]\to \mathbb{E}$ in the Engel group such that, for every $C^{1}$ horizontal curve $\Gamma\colon [0,1]\to \mathbb{E}$, we have
\[m\{t\in [0,1]: \Gamma(t) \neq \gamma(t)\} \geq 1 - \varepsilon,\]
where $m$ denotes Lebesgue measure. Independently, the second author proved in \cite{Zim18} a Whitney extension result for $C^{1}$ horizontal curves in the Heisenberg group. Similar results hold in step two Carnot groups \cite{LS16}. These Whitney extension and Lusin approximation results were then extended by others to a much larger class of Carnot groups \cite{JS16} and sub-Riemannian manifolds \cite{SS18} satisfying a geometric condition known as pliability (Definition \ref{pliability}). 
Roughly speaking, pliability of a Carnot group means that curves that are $C^{1}_{H}$-close to the horizontal line in a horizontal direction have endpoints that form a neighborhood of the endpoint of the horizontal line. Related results study Whitney extensions for curves of higher regularity and a finiteness principle for Whitney extensions in particular Carnot groups \cite{CPS21a, CPS21b, PSZ19, VP06, Zim23}.

A natural question arising from the non-approximation result of \cite{Spe16} mentioned above is whether such a counterexample can be strengthened. Namely, does there exist a horizontal curve in the Engel group that intersects every $C^{1}$ horizontal curve in a set of measure zero? 
In other words, does there exist a horizontal curve $\gamma\colon [0,1]\to \mathbb{E}$ in the Engel group such that, for every $C^{1}$ horizontal curve $\Gamma\colon [0,1]\to \mathbb{E}$,
\[m\{t\in [0,1]: \Gamma(t) \neq \gamma(t)\} = 1?\]

In this paper, we answer this question in the negative. To do this we first show that the results of \cite{JS16} have natural analogues if one assumes pliability in a set of directions rather than pliability in every horizontal direction. In particular, a Whitney extension theorem holds for curve fragments whose candidate derivative takes values in a set of pliable directions. This is our first main result.

\begin{theorem}\label{t-partialwhitney}
    Suppose  $\bbG$ is a Carnot group with horizontal layer $\mathfrak{g}_1$, and fix $\mathfrak{O} \subset \mathfrak{g}_1$.
    If every vector in $\mathfrak{O}$ is pliable, then $\bbG$ has the $C^1_H$ Whitney extension property on $\mathfrak{O}$.
\end{theorem}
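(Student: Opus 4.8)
The plan is to adapt the Whitney extension argument of \cite{JS16}, invoking pliability only at directions lying in $\mathfrak{O}$. By definition of the $C^1_H$ Whitney extension property on $\mathfrak{O}$ we are given a compact set $K\subset\mathbb{R}$, a map $f\colon K\to\bbG$, and a continuous $g\colon K\to\mathfrak{O}$ satisfying the first-order Whitney compatibility condition — informally, that $f(t)$ is approximated by the horizontal segment issued from $f(s)$ in direction $g(s)$ with error $o(|t-s|)$ in the group metric, uniformly for $s,t\in K$ — and we must build a $C^1_H$ horizontal curve $\Gamma$ on $[\min K,\max K]$ with $\Gamma=f$ and $\Gamma'=g$ on $K$. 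After an affine reparametrization, a left translation, and a routine subdivision of the interval, we may assume $K\subseteq[0,1]$ with $0\in K$, $f(0)=0$, and that the Whitney remainder is uniformly as small as we wish.

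I would first produce an auxiliary extension that is $C^1$ but not yet horizontal. In exponential coordinates one extends the first-layer coordinates of $f$, via the classical Euclidean Whitney theorem, so their derivative on $K$ equals $g$, and then extends the higher-layer coordinates so their derivatives on $K$ agree with the iterated-integral (``area-type'') terms prescribed by the group law; the first-order Whitney condition guarantees that this higher-layer data is itself Whitney-admissible. The result is a $C^1$ curve $\tilde\Gamma$ equal to $f$ on $K$ with horizontal velocity $g$ on $K$, horizontal at every point of $K$ but not on the complementary open set $[0,1]\setminus K=\bigsqcup_i(a_i,b_i)$. Unwinding the Whitney condition shows that the horizontality defect accumulated across the $i$-th gap is $o(b_i-a_i)$ in the group-rescaled sense, uniformly in $i$, and that $g(b_i)-g(a_i)\to 0$ uniformly.

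The core of the argument is the gap correction, and here pliability enters. Fix a gap $(a_i,b_i)$ and normalize it to $[0,1]$ by the dilation of ratio $b_i-a_i$ together with left translation by $f(a_i)$; we then face the horizontal line $\ell(t)=\exp(tv_i)$ with $v_i=g(a_i)\in\mathfrak{O}$, a target endpoint $q_i$ within distance $\rho_i$ of $\ell(1)$ with $\rho_i$ uniformly small (and $\to 0$ as $b_i-a_i\to0$), and prescribed one-sided velocities $v_i$ at $0$ and $w_i=g(b_i)$ at $1$, with $|w_i-v_i|$ uniformly small. Since $v_i\in\mathfrak{O}$ is pliable, the endpoints of horizontal curves $\varepsilon$-close to $\ell$ in the $C^1_H$ norm cover a ball about $\ell(1)$ of some radius $\delta(\varepsilon,v_i)>0$; once $\rho_i<\delta(\varepsilon,v_i)$ we may pick a horizontal curve from $0$ to $q_i$ that is $\varepsilon$-close to $\ell$, and a small further perturbation near the two ends (horizontal, fixing the endpoints) makes the one-sided velocities exactly $v_i$ and $w_i$. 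Scaling back and concatenating the resulting pieces $\Gamma_i$ with $f$ along $K$ defines $\Gamma$; it is horizontal by construction, and $\Gamma'=g$ holds on $K$ — including at two-sided accumulation points of the gaps — because the $\Gamma_i$ are $C^1_H$-close to horizontal lines whose directions converge to the relevant value of $g$.

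The step I expect to be the main obstacle is the \emph{uniformity of pliability over the compact set $g(K)\subset\mathfrak{O}$}: to make $\Gamma$ globally $C^1_H$ one needs, for each $\varepsilon>0$, a single $\delta(\varepsilon)>0$ with $\delta(\varepsilon,v)\geq\delta(\varepsilon)$ for every $v\in g(K)$, so that all sufficiently small gaps can be closed by an $\varepsilon$-close correction no matter which direction occurs. I would obtain this by a compactness argument — the correcting curves supplied by pliability can be chosen to depend continuously on the base direction, so the radii $\delta(\varepsilon,\cdot)$ are bounded below on the compact set $g(K)$ — or simply by quoting a locally uniform form of pliability on subsets of $\mathfrak{O}$ if one is already recorded earlier in the paper. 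The remaining pieces are routine: the Euclidean Whitney step is classical, the endpoint-velocity adjustment is a standard small-scale application of pliability, and the $C^1_H$ matching at accumulation points of $K$ is exactly as in \cite{JS16}. The hypothesis $g(K)\subseteq\mathfrak{O}$ is used only in the gap correction, where it ensures every base direction $v_i$ is pliable.
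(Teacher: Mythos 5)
Your overall strategy --- fill the complementary gaps $(a_i,b_i)$ of $K$ with pliability-supplied horizontal curves, rescaled by dilations and left translations, and glue --- is the same as the paper's, but the step you yourself single out as the main obstacle is exactly where your argument has a genuine gap. Uniformity of pliability over the compact set $g(K)$ does not follow from ``the correcting curves can be chosen to depend continuously on the base direction'': pliability is a purely topological statement (a certain endpoint set is a neighborhood of $(\exp(V),V)$ in $\bbG\times\mathfrak{g}_1$), and it provides no continuity of the correcting curves, nor any lower semicontinuity of the radius $\delta(\varepsilon,v)$, in $v$; knowing $\delta(\varepsilon,v)>0$ for each $v$ in a compact set gives no uniform positive lower bound without such semicontinuity. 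The paper's Proposition~\ref{p-unifpliable} is devoted precisely to this point, and its proof is not a soft compactness assertion: from pliability at a single $Y_0$ it manufactures connecting curves whose \emph{initial} derivative is an arbitrary nearby $W$, by first running the derivative linearly from $W$ to $Y_0$ on a short interval $[0,\rho]$ and then appending a $\delta_{1-\rho}$-rescaled pliability curve, with $\rho$ chosen small enough that the re-targeted endpoint $\delta_{(1-\rho)^{-1}}(x_\rho^{-1}z)$ stays within the pliability radius of $Y_0$. Only with this quantitative ``pliability at $Y_0$ controls all nearby initial data'' statement does a finite subcover of $g(K)$ produce a single $\eta(\varepsilon)$ valid for every direction occurring along $K$; without it (or an equivalent lemma) you cannot guarantee that all sufficiently small gaps are bridged with derivative within $\varepsilon$ of $g(a_i)$, and the $C^1$ matching at accumulation points of $K$ collapses.

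Two smaller points. First, your gap correction invokes only the weak form of pliability (endpoint positions cover a ball about $\ell(1)$) and then appeals to ``a small further perturbation near the two ends (horizontal, fixing the endpoints)'' to achieve the prescribed terminal velocity $g(b_i)$; as stated this is unjustified --- altering the terminal derivative of a horizontal curve while pinning its endpoint is not a harmless local perturbation --- but it is also unnecessary, since Definition~\ref{pliability} already gives a neighborhood of $(\exp(V),V)$ in $\bbG\times\mathfrak{g}_1$, i.e.\ simultaneous control of the endpoint and the endpoint derivative, which is how the paper proceeds. Second, the auxiliary non-horizontal Euclidean Whitney extension $\tilde\Gamma$ plays no role in your final construction; what is actually needed from that circle of ideas is that the Carnot Whitney condition implies the classical first-order estimate $|f(y)-f(x)-(y-x)X(x)(f(x))|=o(|y-x|)$ uniformly on $K$ (the content of the paper's Lemmas~\ref{l-toostrong} and~\ref{l-toostrong2}), which you assert but do not prove, and which is what verifies $\dot\Gamma=X(\cdot)(\Gamma(\cdot))$ and the continuity of $\dot\Gamma$ along sequences lying in $K$.
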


Our proof of Theorem \ref{t-partialwhitney} expands upon that of the corresponding result in \cite{JS16}, with additional tracking of which directions need to be pliable. The reverse implication of Theorem \ref{t-partialwhitney} is also likely true and should follow an identical argument as the proof of Theorem~3.8 in \cite{JS16}. More precisely, if $\bbG$ has the $C^1_H$ Whitney extension property on $\mathfrak{O}$, then every vector in $\mathfrak{O}$ should be pliable. However, we did not check the details carefully. 

We next show that every direction in the Engel group is pliable except for those that are a scalar multiple of $X_{2}$ (Theorem \ref{t-engel}). We then use Theorem \ref{t-partialwhitney} and Theorem \ref{t-engel} to prove a partial Lusin approximation result in the Engel group (Theorem \ref{engellusin}). From this, we show that every horizontal curve in the Engel group must intersect some $C^{1}$ horizontal curve in a set of positive measure (Theorem~\ref{answerisno}). This is our second main result.

\begin{theorem}\label{answerisno}
Let $\gamma\colon [a,b]\to \mathbb{R}^{4}$ be a horizonal curve in the Engel group. Then there exists a $C^{1}$ horizontal curve $\Gamma\colon [a,b]\to \mathbb{R}^{4}$ such that
\[m\{t\in [a,b]:\Gamma(t)=\gamma(t)\}>0.\]
\end{theorem}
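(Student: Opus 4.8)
The plan is to deduce Theorem~\ref{answerisno} from the partial Lusin approximation result (Theorem~\ref{engellusin}) together with the structure of horizontal curves in the Engel group. First I would recall that a horizontal curve $\gamma\colon[a,b]\to\bbR^4$ in the Engel group is absolutely continuous with $\dot\gamma(t)$ lying in the horizontal distribution for a.e.\ $t$, so that we may write $\dot\gamma(t) = u_1(t)X_1(\gamma(t)) + u_2(t)X_2(\gamma(t))$ for measurable controls $u_1,u_2\in L^1([a,b])$. The obstruction to applying a Lusin approximation directly is the non-pliable direction: by Theorem~\ref{t-engel}, every direction except the scalar multiples of $X_2$ is pliable, so the partial Lusin approximation of Theorem~\ref{engellusin} applies on the set of parameters where the direction of $\dot\gamma$ avoids a neighborhood of the $X_2$-axis.

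The key dichotomy is therefore between the set
\[
E_1 = \{t\in[a,b] : u_1(t)\neq 0\}
\]
and its complement. If $m(E_1)>0$, then on a suitable measurable subset $E\subseteq E_1$ of positive measure the normalized derivative $\dot\gamma(t)/|\dot\gamma(t)|$ stays uniformly bounded away from $\pm X_2$, hence takes values in a compact set $\mathfrak{O}$ of pliable directions; I would then invoke Theorem~\ref{engellusin} to produce a $C^1$ horizontal curve $\Gamma$ agreeing with $\gamma$ on a positive-measure subset of $E$, which finishes this case. If instead $m(E_1)=0$, then $u_1 \equiv 0$ a.e., and I would argue directly: with $u_1=0$ the horizontality constraints for the Engel group force the last two coordinates of $\gamma$ to be constant (the bracket-generating terms $[X_1,X_2]$ and $[X_1,[X_1,X_2]]$ only contribute when $u_1\neq 0$), so $\gamma$ traces out a curve that is literally a straight line in the $X_2$-direction, hence is already $C^1$ and horizontal, and one may take $\Gamma=\gamma$.

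The main obstacle I anticipate is the bookkeeping in the first case: ensuring that after restricting to where $u_1$ is bounded away from $0$ and the direction avoids the $X_2$-axis, one still has a genuine curve fragment (a compact-domain restriction of $\gamma$ with a well-defined candidate derivative satisfying the Taylor-type compatibility hypotheses) to which Theorem~\ref{engellusin} applies, and that the resulting $\Gamma$ can be taken to be defined on all of $[a,b]$ rather than just the fragment — this should follow by extending $\Gamma$ past the fragment using the Whitney extension property (Theorem~\ref{t-partialwhitney}) applied with the candidate derivative set equal to a fixed pliable direction outside the fragment, or simply by concatenating with a horizontal line. A secondary technical point is verifying that on a positive-measure subset the normalized derivative is genuinely bounded away from the bad direction; this follows from inner regularity of Lebesgue measure once one observes that $\{t : u_1(t)\neq 0\} = \bigcup_{n} \{t : |u_1(t)| \ge 1/n,\ |u_2(t)|\le n\}$ up to a null set, so at least one of these sets has positive measure.
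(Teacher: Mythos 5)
Your overall strategy — split according to whether the set where $\gamma'$ leaves $\mathrm{Span}(X_2)$ has positive measure, apply Theorem~\ref{engellusin} in the first case, and argue directly in the degenerate case — is exactly the paper's. In the first case your extra worry about normalizing the derivative and keeping it uniformly away from the $X_2$-axis is unnecessary: Theorem~\ref{engellusin} only requires $m(S)>0$ for $S=\{t:\gamma'(t)\notin\mathrm{Span}(X_2)\}$, and the compactness and continuity issues are already absorbed into its proof, which also produces $\Gamma$ on all of $[a,b]$. So that half is fine, if slightly over-engineered.

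The second case, however, contains a genuine error. When $u_1=0$ a.e., the horizontality constraints $\dot\gamma_3=\gamma_1\dot\gamma_2$ and $\dot\gamma_4=\tfrac12\gamma_1^2\dot\gamma_2$ do force (after left-translating so that $\gamma(a)=0$) $\gamma_1\equiv\gamma_3\equiv\gamma_4\equiv 0$, so the \emph{image} of $\gamma$ lies on the $x_2$-axis. But the \emph{parametrization} $t\mapsto(0,\gamma_2(t),0,0)$ is only absolutely continuous: $\gamma_2$ is the indefinite integral of an arbitrary $L^1$ control $u_2$, and such a function is in general nowhere $C^1$. Your conclusion ``hence is already $C^1$ and horizontal, and one may take $\Gamma=\gamma$'' therefore fails — a curve whose trace is a line segment need not be a $C^1$ curve. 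The repair is short and is what the paper does: apply the classical one-dimensional Lusin approximation (via Lusin's theorem to make $\dot\gamma_2$ continuous on a compact set $K$ of positive measure where the first-order Taylor condition holds uniformly, followed by the classical Whitney extension theorem) to obtain a $C^1$ function $W\colon[a,b]\to\mathbb{R}$ with $W=\gamma_2$ on $K$, and then set $\Gamma(t)=(0,W(t),0,0)$, which is $C^1$, horizontal by Lemma~\ref{Engellift}, and agrees with $\gamma$ on a set of positive measure.
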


As a corollary, we may conclude that the hypothesized example of a curve intersecting every $C^1$ horizontal curve in a set of measure 0 cannot exist in the Engel group.

We now describe the organization of the paper. In Section 2 we recall background on Carnot groups, Whitney extension, and pliability. In Section 3 we prove the Whitney extension result for curve fragments whose candidate derivative takes values in a set of pliable directions. In Section 4 we apply these to prove Lusin approximation results in the Engel group and answer the stated question in the negative. Finally in the Appendix we explain the expressions for the horizontal vector fields of the Engel group in second exponential coordinates. These seem well known in the literature, but we were unable to find a concrete justification.

\medskip

\textbf{Acknowledgments:} The authors thank Jonathan Bennett for suggesting the question in the Engel group which led to this paper. 

G. Speight was supported by the National Science Foundation under Award No. 2348715.

\section{Preliminaries}
\label{s-carnot}

\subsection{Carnot Groups}

\subsubsection{Overview}

We recall that a Lie algebra is a vector space $\mathfrak{g}$ equipped with a Lie bracket $[\cdot,\cdot] \colon \mathfrak{g}\times \mathfrak{g}\to \mathfrak{g}$ that is bilinear, satisfies $[x,x]=0$, and satisfies the Jacobi identity $[x,[y,z]]+[y,[z,x]]+[z,[x,y]]=0$. The Lie algebra associated to a Lie group is the space of left invariant vector fields equipped with the Lie bracket $[X,Y]$ defined on smooth functions as $[X,Y](f)=X(Y(f))-Y(X(f))$.

\begin{definition}\label{Carnot}
Consider a Lie algebra $\mathfrak{g}$ that is {\em nilpotent} and {\em stratified},
i.e. it admits a decomposition as a direct sum of subspaces of the form
\[\mathfrak{g}=\mathfrak{g}_{1}\oplus \mathfrak{g}_{2}\oplus \cdots \oplus \mathfrak{g}_{s}\]
such that $\mathfrak{g}_{i}=[\mathfrak{g}_{1},\mathfrak{g}_{i-1}]$ for any $i=2, \ldots, s$, and $[\mathfrak{g}_{1},\mathfrak{g}_{s}]=0$. The subspace $\mathfrak{g}_{1}$ is called the \emph{horizontal layer}. Any connected, simply connected Lie group $\bbG$ whose Lie algebra $\mathfrak{g}$ has such a structure is called a \emph{Carnot group}.
We call $s$ the \emph{step} of $\bbG$.
\end{definition}

\begin{remark}
In any Carnot group $\mathbb{G}$ we adopt the following notation. Let $m_i:=\dim(\mathfrak{g}_i)$, $h_i:=m_1+\dots +m_i$ for $1\leq i\leq s$, and $h_{0}:=0$. A basis $X_1,\dots, X_n$ of $\mathfrak{g}$ is \emph{adapted to the stratification} if $X_{h_{i-1}+1},\dots, X_{h_{i}}$ is a basis of $\mathfrak{g}_i$ for $1\leq i \leq s$. We fix such a basis. Let $r$ be the dimension of the horizontal layer i.e. $r:=m_{1}$.
\end{remark}

\begin{definition}
The map $\exp \colon \mathfrak{g}\to \mathbb{G}$ is defined by $\exp(X)=\gamma(1)$ where $\gamma:\bbR \to \bbG$ is the unique one-parameter subgroup of 
$\bbG$ whose tangent vector at the identity is equal to $X$.
This \emph{exponential map} is a diffeomorphism between $\mathfrak{g}$ and $\mathbb{G}$. 
\end{definition}

\subsubsection{Coordinates}

We can identify $\mathbb{G}$ with $\mathbb{R}^{n}$ using the following diffeomorphism:
\[ (x_{1}, \ldots, x_{n})\in \mathbb{R}^{n} \longleftrightarrow \exp(x_{1}X_{1}+\cdots +x_{n}X_{n})\in \mathbb{G}.\]
This identification is known as \emph{exponential coordinates of the first kind},
and this choice of coordinates induces a Lie group operation on $\bbR^{n}$ according to the Baker-Campbell-Hausdorff formula  \cite[Definition~2.2.11]{BLU07}. This coordinate system will be used throughout Section~\ref{sec-whitney}.

Another possible identification of $\bbG$ with $\bbR^{n}$ is through the mapping
\[(x_{1},\ldots,x_{n}) \in \mathbb{R}^{n} \longleftrightarrow \exp(x_{n}X_{n})\cdots \exp(x_{1}X_{1}) \in \mathbb{G}. \]
This is known as \emph{exponential coordinates of the second kind}, and the map is again a diffeomorphism \cite[1.3.1 Lemma]{FS82}. This coordinate system will be used in Section~\ref{sec-engel}. For an example in one Carnot groups, see the appendix.

In either choice of coordinates, a Haar measure on $\mathbb{G}$ (i.e. a Borel measure that is invariant under left translation by the group operation) can be chosen to be the Lebesgue measure on $\mathbb{R}^n$.

\subsubsection{Structure in exponential coordinates of the first kind}

Let us focus our attention now on exponential coordinates of the first kind. 
The \emph{homogeneity} $d_i\in\mathbb{N}$ of the coordinate $x_i$ is defined by
\[ d_i:=j \quad\text {whenever}\quad h_{j-1}+1\leq i\leq h_{j}.\]
This number tracks the layer of the stratification in which the corresponding basis vector $X_i$ lies.
For any $\lambda >0$, the \emph{dilation} $\delta_\lambda\colon \bbG\to\bbG$, is given in coordinates by
\[\delta_\lambda(x_1, \dots ,x_n)=(\lambda^{d_1}x_1, \dots ,\lambda^{d_n}x_n)\]
and is a group automorphism on $\bbG$.

Since the Lie algebra is nilpotent, one may explicitly compute the group operation in $\mathbb{G}$ from the Lie bracket combinations in $\mathfrak{g}$ via the Baker–Campbell–Hausdorff formula.
The following properties of the group law on $\mathbb{R}^n$ follow from this computation and are well known. See, for example,  \cite[Proposition~2.2.22]{BLU07}.

\begin{proposition}
    \label{p-structure}
    We may write the group law in $\bbG$ as $xy = x+y+Q(x,y)$ for some polynomial $Q=(Q_1,\dots,Q_n)$ where
    \begin{enumerate}
        \item $Q_1=\dots=Q_r=0$,
        \item for $r < i \leq n$, the real valued polynomial $Q_i(x,y)$ may be written as a sum of terms each of which contains a factor of the form $(x_jy_\ell - x_\ell y_j)$ for some $1 \leq \ell < i$ and $1 \leq j < i$,
        \item  $Q_i$ is homogeneous of degree $d_i$ with respect to dilations i.e. $Q_i(\delta_t (x),\delta_t(y)) = t^{d_i} Q_i(x,y)$ for all $x,y \in \mathbb{G}$ and $t > 0$,
        \item if the coordinate $x_i$ has homogeneity $d_i \geq 2$, then $Q_i(x, y)$ depends only on the coordinates of $x$ and $y$ that have homogeneity strictly less than $d_i$.
    \end{enumerate}
\end{proposition}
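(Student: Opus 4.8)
The plan is to read off all four statements from the Baker--Campbell--Hausdorff (BCH) formula, using nilpotency and the grading of $\mathfrak g$. Set $\xi := \sum_{i=1}^{n} x_i X_i$ and $\eta := \sum_{i=1}^{n} y_i X_i$. By BCH, $\exp(\xi)\exp(\eta) = \exp\bigl(H(\xi,\eta)\bigr)$ with $H(\xi,\eta) = \xi + \eta + \sum_{k\ge 2} H_k(\xi,\eta)$, each $H_k$ a universal Lie polynomial that is homogeneous of degree $k$ in $\xi$ and $\eta$ together (e.g.\ $H_2 = \tfrac12[\xi,\eta]$). Since $\mathfrak g$ is nilpotent of step $s$, every bracket of more than $s$ elements of $\mathfrak g$ vanishes, so the series stops at $k = s$ and $H(\xi,\eta)$ is a polynomial in $x,y$; expanding $\sum_{k=2}^{s} H_k(\xi,\eta)$ in the adapted basis yields a polynomial map $Q = (Q_1,\dots,Q_n)$ with $xy = x + y + Q(x,y)$. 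Since $\exp(0)$ is the identity of $\mathbb G$, we get $Q(x,0) = Q(0,y) = 0$, so \emph{every monomial appearing in any $Q_i$ contains at least one $x$-variable and at least one $y$-variable}; I will use this twice below.

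For statement (1): for $k\ge 2$ the term $H_k(\xi,\eta)$ is a sum of brackets of $k$ copies of $\xi$ or $\eta$; expanding each copy into graded components and using $[\mathfrak g_a,\mathfrak g_b]\subseteq\mathfrak g_{a+b}$ puts $H_k(\xi,\eta)$ in $\mathfrak g_2\oplus\dots\oplus\mathfrak g_s = \operatorname{span}(X_{r+1},\dots,X_n)$, so $Q_1=\dots=Q_r = 0$. For statement (3): the linear map $\delta_t$ acting as $t^{j}\,\mathrm{id}$ on $\mathfrak g_j$ is a Lie algebra automorphism --- this is where the stratification is used, since for $\xi\in\mathfrak g_a$, $\eta\in\mathfrak g_b$ both $[\delta_t\xi,\delta_t\eta]$ and $\delta_t[\xi,\eta]$ equal $t^{a+b}[\xi,\eta]$ --- so conjugating by $\exp$ makes the coordinate dilation a group automorphism; then $\delta_t(xy) = (\delta_t x)(\delta_t y)$ minus $\delta_t(x+y) = \delta_t x + \delta_t y$ gives $\delta_t\,Q(x,y) = Q(\delta_t x,\delta_t y)$, i.e.\ $Q_i(\delta_t x,\delta_t y) = t^{d_i} Q_i(x,y)$. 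Statement (4) then follows: by (3) each monomial of $Q_i$ has total weight (the sum, with multiplicity, of the homogeneities of its variables) equal to $d_i$; a variable of homogeneity $\ge d_i$ would, together with the required variable from the other set, force the weight to be $\ge d_i + 1$ --- impossible --- so $Q_i$ involves only coordinates of homogeneity $< d_i$.

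Statement (2) is the crux. The key algebraic fact is that $\sum_{k\ge 2} H_k(\xi,\eta)$ lies in the Lie ideal generated by $[\xi,\eta]$: in the free Lie algebra on two generators, the span of brackets of length $\ge 2$ is exactly the ideal generated by the bracket of the generators --- one uses the Jacobi identity to rewrite each iterated bracket as a combination of left-normed brackets $\operatorname{ad}_{w_1}\cdots\operatorname{ad}_{w_m}[\xi,\eta]$ with $w_i\in\{\xi,\eta\}$ (a left-normed bracket whose innermost pair is not $\pm[\xi,\eta]$ is zero) --- and then pushes this forward by the homomorphism sending the generators to $\xi,\eta$. Hence every $Q_i(x,y)$ is a finite linear combination of $X_i$-coordinates of terms $\operatorname{ad}_{w_1}\cdots\operatorname{ad}_{w_m}[\xi,\eta]$. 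Now $[\xi,\eta] = \sum_{1\le j<\ell\le n}(x_j y_\ell - x_\ell y_j)[X_j,X_\ell]$ by bilinearity and antisymmetry, and each further $\operatorname{ad}_{w_i}$ only multiplies by a single variable and lengthens the basis brackets $[X_j,X_\ell]$ while leaving the scalar factors $x_j y_\ell - x_\ell y_j$ intact; so every term of $Q_i$ carries such a factor. Finally, the iterated bracket $[X_{p_1},[\cdots,[X_j,X_\ell]\cdots]]$ lies in $\mathfrak g_{d_{p_1}+\cdots+d_j+d_\ell}$, so its $X_i$-component is zero unless $d_i = d_{p_1}+\cdots+d_j+d_\ell$; since all homogeneities are $\ge 1$, this forces $d_j < d_i$ and $d_\ell < d_i$, hence $j < i$ and $\ell < i$ because the basis is adapted to the stratification. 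That is (2). The only genuinely delicate point is this last step --- verifying that the antisymmetric ``seed'' $[\xi,\eta]$ propagates unchanged through all the $\operatorname{ad}$'s and that the weight count pins the skew-factor indices below $i$; this is exactly the bookkeeping carried out in \cite[Proposition~2.2.22]{BLU07}.
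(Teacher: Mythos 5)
Your proof is correct. Note that the paper does not actually prove this proposition: it states that the properties ``follow from this computation and are well known,'' citing \cite[Proposition~2.2.22]{BLU07}, so your argument is essentially a self-contained write-up of the same BCH-based computation that the cited reference carries out. Items (1), (3), and (4) are exactly the routine grading/dilation bookkeeping: nilpotency truncates the series, $H_k(\xi,\eta)\in\mathfrak g_2\oplus\dots\oplus\mathfrak g_s$ gives (1), the fact that $\delta_t$ is a Lie algebra automorphism gives (3), and (3) combined with $Q(x,0)=Q(0,y)=0$ gives (4). The one genuinely nontrivial ingredient you supply, and identify correctly as the crux, is the free Lie algebra fact that every Lie element of degree at least $2$ in $\xi,\eta$ is a linear combination of right-normed brackets $\operatorname{ad}_{w_1}\cdots\operatorname{ad}_{w_m}[\xi,\eta]$ with $w_i\in\{\xi,\eta\}$; once the seed $[\xi,\eta]=\sum_{j<\ell}(x_jy_\ell-x_\ell y_j)[X_j,X_\ell]$ is isolated, the weight count $d_i=d_{p_1}+\dots+d_{p_m}+d_j+d_\ell$ forces $d_j,d_\ell<d_i$ and hence $j,\ell<i$ by adaptedness of the basis, which is (2). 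Two small points you use implicitly and could state explicitly: the inclusion $[\mathfrak g_a,\mathfrak g_b]\subseteq\mathfrak g_{a+b}$ (with $\mathfrak g_w=0$ for $w>s$) is a consequence of the stratification $\mathfrak g_i=[\mathfrak g_1,\mathfrak g_{i-1}]$ proved by a one-line Jacobi induction, and the spanning-by-right-normed-brackets fact itself deserves either a short Jacobi-identity induction or a citation (Dynkin--Specht--Wever); with those noted, the argument is complete.
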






\subsubsection{Curves and distances in Carnot groups}

Identify $\bbG$ with $\bbR^n$ in either choice of exponential coordinates.
A curve $\gamma \colon [a,b]\to \bbG$ is \emph{absolutely continuous} if it is absolutely continuous as a curve into $\mathbb{R}^{n}$. This implies that the velocity $\dot{\gamma}(t)$ exists for almost every $t\in [a,b]$.
Since the choice of coordinates induces $\bbR^n$ with a Lie group structure with Lie algebra $\mathfrak{g}$, we may view any $X \in \mathfrak{g}$ as a left invariant vector field on $\mathbb{R}^n$. That is, we can write 
$\dot{\gamma}(t)=\sum_{j=1}^{n}u_{j}(t)X_{j}(\gamma(t))$ for some controls $u_{1}, \ldots, u_{n}\in L^{1}[a,b]$.

\begin{definition}
An absolutely continuous curve $\gamma\colon [a,b]\to \bbG$ is \emph{horizontal} if we can write 
$\dot{\gamma}(t)=\sum_{j=1}^{r}u_{j}(t)X_{j}(\gamma(t))$ almost everywhere i.e. if $u_{r+1} = \cdots = u_n = 0$ a.e.
We define the \emph{length} of such a curve by $\ell_{\bbG}(\gamma):=\int_{a}^{b}|u|$. 
\end{definition}

For any $t$ at which $\dot{\gamma}(t)$ exists, we will write $\gamma'(t) = \sum_{j=1}^{r}u_{j}(t)X_{j} \in \mathfrak{g}$. In doing so we view $\gamma'$ as a map into $\mathfrak{g}$.
This gives the following alternate definition of horizontality. 

\begin{definition}
An absolutely continuous curve $\gamma\colon [a,b]\to \bbG$ is \emph{horizontal} if $\gamma' \in \mathfrak{g}_1$ almost everywhere.
\end{definition}



\begin{definition}
The \emph{Carnot-Carath\'{e}odory distance (CC distance)} between any two points $x, y\in \bbG$ is defined by
\[d(x,y):=\inf \{\ell_{\bbG}(\gamma)\colon \gamma \mbox{ is a horizontal curve joining } x \mbox{ and }y\}.\]
\end{definition}


Note that the CC distance is left-invariant and homogeneous with repect to dilations i.e. $d(rp,rq) = d(p,q)$ and $d(\delta_{\lambda}(p),\delta_{\lambda}(q)) = {\lambda} d(p,q)$ for any $p,q,r \in \bbG$ and $\lambda > 0$.


We will also make use of another distance which is useful for computations. In any Carnot group $\mathbb{G}$ represented in first exponential coordinates, 
if we write $x=(\hat{x}_1,\dots,\hat{x}_s) \in \mathbb{R}^{\dim \mathfrak{g}_1} \times \cdots \times \mathbb{R}^{\dim \mathfrak{g}_s}$,
there exist positive constants $\lambda_1,\dots,\lambda_s$ such that the following defines a norm on $\mathbb{G}$ \cite{Cas16}:
    $$
    \Vert x \Vert_{Box} := \max \{ \lambda_i |\hat{x}_k|^{1/k} : k=1,\dots, s \} \qquad \mbox{for }x\in \mathbb{G}.
    $$
Then $d_{Box}(x,y) := \Vert y^{-1}x \Vert_{Box}$ for $x,y\in \mathbb{G}$ defines another distance that is left invariant, homogeneous, and continuous with respect to the Euclidean topology. Hence it is bi-Lipschitz equivalent to the CC-metric $d$ on $\bbG$, namely there is a constant $C>0$ 
such that
\begin{equation}
    \label{e-supmetric}
    C^{-1}d(x,y) \leq d_{Box}(x,y) \leq C d(x,y) \qquad \mbox{for }x,y\in \bbG.
\end{equation}
Note that $C$ is independent of $x$ and $y$.

\subsubsection{Smooth curves and pliability}

\begin{definition}
    Given an interval $I \subset \bbR$, a curve $\gamma:I \to \bbG$ is $C^1_H$ if it is $C^1$ (i.e. continuously differentiable) as a curve in $\bbR^n$ and if it is a horizontal curve. Equivalently, $\gamma$ is $C^1_H$ if $\gamma':I \to \mathfrak{g}_1$ exists and is continuous.
\end{definition}

We denote by $C_{H}^{1}(I,\bbG)$ the set of all curves $C^1_H$ curves $\gamma:I \to \bbG$. 
The fixed basis $X_{1},\cdots, X_{r}$ of $\mathfrak{g}_{1}$ defines a natural inner product norm $\|\cdot \|$ on $\mathfrak{g}_{1}$ with respect to which the basis vectors are orthonormal. The space $C_{H}^{1}([a,b],\bbG)$ can be equipped with a natural metric defined by
\[ d(\gamma_{1},\gamma_{2}):=\max \left( \sup_{t\in [a,b]} d(\gamma_{1}(t),\gamma_{2}(t)), \ \sup_{t\in [a,b]} \|\gamma_{2}'(t)-\gamma_{1}'(t)\| \right).\]
We will sometimes use the notation $\|\gamma_{2}'-\gamma_{1}'\|_{\infty}$ to denote $\sup_{t\in [a,b]} \|\gamma_{2}'(t)-\gamma_{1}'(t)\|$. If working in coordinates, we will occasionally use $|\cdot|$ to denote the Euclidean norm on $\mathbb{G}$. 

The following geometric property of curves was introduced in \cite{JS16}.

\begin{definition}\label{pliability}
A curve $\gamma \in C_{H}^{1}([a,b],\bbG)$ is \emph{pliable} if for every neighborhood $\mathcal{V}$ of $\gamma$ in $C_{H}^{1}([a,b],\bbG)$ the set
\[ \{ (\beta(b),\beta'(b)): \beta \in \mathcal{V}, (\beta,\beta')(a)=(\gamma,\gamma')(a)                        \} \]
is a neighborhood of $(\gamma(b),\gamma'(b))$ in $\bbG\times \mathfrak{g}_{1}$.

A vector $V\in \mathfrak{g}_{1}$ is pliable if the curve $t\mapsto \exp(tV)$ for $t \in [0,1]$ is pliable. The group $\bbG$ is pliable if all vectors $V \in \mathfrak{g}_1$ are pliable.
\end{definition}

\subsection{The Whitney Extension Property}

Recall that a map $f\colon \mathbb{G}_{1}\to \mathbb{G}_{2}$ between Carnot groups is \emph{Pansu differentiable} \cite{DMM25, Mon01, Pan89} at a point $x\in \bbG_{1}$ if there exists a \emph{group linear map} $L\colon \bbG_{1}\to \bbG_{2}$ such that
\begin{equation}
\label{e-pansuDQ}
\lim_{y\to x}\frac{d_2(f(y),f(x)L(x^{-1}y))}{d_1(x,y)}=0
\end{equation}
where $d_1$ and $d_2$ are the CC-metrics in $\bbG_1$ and $\bbG_2$ respectively.
Here a group linear map is one which respects the group operation and dilations. The following is Pansu's generalization of Rademacher's theorem to Carnot groups.

\begin{theorem}[Pansu's Theorem \cite{Pan89}]\label{pansu}
Let $f\colon \bbG_{1}\to \bbG_{2}$ be a Lipschitz map between Carnot groups. Then $f$ is Pansu differentiable almost everywhere with respect to the Haar measure on $\bbG_{1}$.
\end{theorem}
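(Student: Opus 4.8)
The statement is Pansu's theorem, so the plan is to adapt the blow-up proof of Rademacher's theorem to the graded setting. Writing $\delta^{1}$ and $\delta^{2}$ for the dilations in $\bbG_{1}$ and $\bbG_{2}$, for $x\in\bbG_{1}$ and $r>0$ I would introduce the rescaled maps
$$f_{x,r}(y):=\delta^{2}_{1/r}\bigl(f(x)^{-1}\,f\bigl(x\,\delta^{1}_{r}(y)\bigr)\bigr).$$
Left-invariance and homogeneity of the CC-metrics, together with the Lipschitz bound on $f$, show that the family $\{f_{x,r}\}_{r>0}$ is uniformly Lipschitz and fixes the identity. Setting $y=x\,\delta^{1}_{r}(z)$ in \eqref{e-pansuDQ} and using that a group linear map $L$ commutes with dilations, one computes that the difference quotient equals $d_{2}(f_{x,r}(z),L(z))/d_{1}(e,z)$. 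Hence Pansu differentiability of $f$ at $x$ is \emph{equivalent} to the existence of a single homogeneous homomorphism $L$ with $f_{x,r}\to L$ uniformly on compact sets as $r\to 0^{+}$. The theorem thus reduces to producing, for a.e.\ $x$, this full uniform limit and verifying that it is a homomorphism.

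First I would establish the existence of horizontal directional derivatives almost everywhere. Fix a basis $X_{1},\dots,X_{r}$ of the horizontal layer of $\bbG_{1}$. For each $i$, the group is foliated by the integral curves $t\mapsto x\exp(tX_{i})$ of the left-invariant field $X_{i}$, and along each such curve $t\mapsto f(x\exp(tX_{i}))$ is a Lipschitz curve into $\bbG_{2}$ (by left-invariance and homogeneity of $d_{2}$), hence Pansu differentiable for a.e.\ $t$ by the elementary one-dimensional case. A Fubini argument over a transversal to the flow of $X_{i}$ then gives a full-measure set on which
$$D_{X_{i}}f(x):=\lim_{t\to 0}\delta^{2}_{1/t}\bigl(f(x)^{-1}f(x\exp(tX_{i}))\bigr)$$
exists; intersecting over $i=1,\dots,r$ and over a countable dense set of rational horizontal combinations yields a full-measure set $A$ on which all horizontal directional derivatives exist.

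Next I would show that at each $x\in A$ these directional derivatives assemble into the candidate differential. Expanding the group law via Proposition~\ref{p-structure} and exploiting the skew-symmetric form of the polynomials $Q_{i}$, one checks that $V\mapsto D_{V}f(x)$ is linear on the horizontal layer and that the resulting linear map between horizontal layers extends to at most one homogeneous homomorphism $L_{x}\colon\bbG_{1}\to\bbG_{2}$, since the horizontal layer generates $\bbG_{1}$ and the bracket relations force the values of $L_{x}$ on the higher layers. This pins down the unique possible Pansu differential at each point of $A$.

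The final step, and the principal obstacle, is upgrading a.e.\ existence of horizontal directional derivatives to genuine Pansu differentiability, i.e.\ showing $f_{x,r}\to L_{x}$ uniformly on compact sets at a.e.\ $x$, rather than merely along horizontal segments through $x$. The difficulty is that one can move directly only in horizontal directions and must reach all other directions through iterated commutators, so convergence along individual horizontal lines does not by itself control $f$ on a whole neighborhood of $x$. I expect to resolve this exactly as in the Euclidean proof: use the uniform Lipschitz bound to promote the directional information to $L^{1}_{\loc}$ convergence of the blow-ups, and then apply a Lebesgue-point (maximal function) argument to the horizontal derivatives to recover, at a.e.\ $x$, the pointwise uniform convergence defining Pansu differentiability. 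Here the Lipschitz hypothesis is indispensable: it is what propagates differentiability from the generating horizontal directions to the full group and makes the remainder estimate uniform.
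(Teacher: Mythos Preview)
The paper does not prove this theorem at all: it is quoted as background with a citation to Pansu's original paper \cite{Pan89}, so there is no ``paper's own proof'' to compare against. Your proposal is therefore being assessed on its own.

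As an outline of Pansu's argument your sketch is broadly on the right track, but two of the steps you pass over are exactly where the real content lies, and what you write there does not yet constitute a proof. First, the claim that $V\mapsto D_{V}f(x)$ is linear on the horizontal layer is not something one ``checks'' from the polynomial form of $Q_{i}$ in Proposition~\ref{p-structure}; additivity of directional derivatives is already nontrivial in Rademacher's theorem and is obtained there by an integration (Fubini/integration-by-parts) argument, not by algebraic manipulation of the group law. You need an argument of that type here as well. Second, your ``final step'' is the heart of Pansu's theorem, and saying you ``expect to resolve this exactly as in the Euclidean proof'' understates the difficulty: in $\bbR^{n}$ every direction is accessible at first order, whereas in a Carnot group of step $s\geq 2$ you must control the blow-up in directions that are only reached through iterated brackets of horizontal flows. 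The mechanism that makes this work (uniform convergence of difference quotients on large sets via Egorov/Lusin, together with the bracket-generating structure to propagate control from horizontal segments to all of $\bbG_{1}$) is genuinely more delicate than the Euclidean Lebesgue-point argument, and your sketch does not supply it. Until those two steps are filled in, the proposal is a plausible plan rather than a proof.
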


In the case of Lipschitz curves (when $\bbG_{1}=\bbR)$, the group linear map $L$ has the form $h\mapsto \exp(hX)$ for some $X$ in the horizontal layer of $\bbG_{2}$ \cite[Lemma 2.1.4]{Mon01}. 
This motivates the following definition, adapted from \cite{JS16} to only consider directions in a subset of $\mathfrak{g}_1$.

\begin{definition}\label{directionalWhitney}
Suppose $\bbG$ is a Carnot group with horizontal layer $\mathfrak{g}_1$ and fix $\mathfrak{O} \subset \mathfrak{g}_1$. We will say that $\bbG$ has the {\em $C^1_H$ Whitney extension property on $\mathfrak{O}$} if the following implication is true:
for any compact $K \subset \mathbb{R}$, any continuous map $\gamma:K \to \bbG$, and any continuous map $X:K \to \mathfrak{O}$, 
if
\[ r_{K,\eta}:= \sup_{\substack{\tau, t\in K\\0<|\tau-t|<\eta}} \frac{ d(\gamma(t), \gamma(\tau)\exp((t-\tau)X(\tau)))}{|\tau-t|}\]
satisfies $r_{K,\eta}\to 0$ as $\eta \downarrow 0$,
then there is a curve $\Gamma \in C^1_H(\bbR,\bbG)$ such that $\Gamma|_{K}=\gamma$ and $\Gamma'|_{K}=X$.
\end{definition}
Note that this property acts as a converse to the uniform convergence of difference quotients of horizontal $C^1$ maps on compact sets (see \cite[Theorem 1.2]{Mag13} or \cite[Theorem~1.1]{Zim25}) just as Whitney's classical extension theorem acts as a converse to Taylor's theorem in the Euclidean setting.







\subsection{Engel Group}

Here, we describe a particular step 3 Carnot group that is not pliable. That is, it is a Carnot group in which there exist non-pliable horizontal vectors.
This group will be the focus of Section~\ref{sec-engel}.

\begin{definition}\label{Engeldef}
The Engel group $\mathbb{E}$ is the Carnot group whose Lie algebra $\mathfrak{e}$ has the decomposition
\[\mathfrak{e} =V_1 \oplus V_2 \oplus V_3,\]
where $V_1 = \text{span}\{X_1,X_2\}$, $V_2 = \text{span}\{X_3\}$, $V_3 = \text{span}\{X_4\}$, and the only non-zero Lie brackets among them are
\[[X_1,X_2] = X_3, \qquad [X_1,X_3] = X_4.\]
\end{definition}

It will be convenient to identify the Engel group with $\bbR^4$ using exponential coordinates of the second kind. In this setting, the horizontal vector fields $X_{1}$ and $X_{2}$ are given by
\begin{equation}\label{engelhorizontalvectors}
X_1(x)=\partial_1, \qquad X_2(x)=\partial_2 + x_1 \partial_3 +  \tfrac12 x_1^2\partial_4.
\end{equation}
The expressions for $X_{1}$ and $X_{2}$ seem to be well known, but we were not able to find an argument in the literature providing their derivation. Hence we give a brief justification in the appendix. Recall that we use $\dot{\gamma}$ to denote the Euclidean derivative of a curve in first or second exponential coordinates to distinguish it from $\gamma'$ which is identified with an element of the Lie algebra.

\begin{lemma}\label{Engellift}
An absolutely continuous curve $\gamma=(\gamma_{1},\gamma_{2},\gamma_{3},\gamma_{4}):[a,b] \to \mathbb{R}^{4}$ is horizontal in $\mathbb{E}$ if and only if
\[\dot{\gamma}_3(t) = \gamma_1(t) \dot{\gamma_2}(t) \quad \mbox{and} \quad \dot{\gamma}_4(t) = \tfrac12 (\gamma_1(t))^{2} \dot{\gamma}_2(t) \quad \mbox{for almost every }t\in [a,b].\]
\end{lemma}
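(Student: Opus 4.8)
The plan is to unravel the definition of horizontality directly, using the explicit formulas for $X_1$ and $X_2$ recorded in \eqref{engelhorizontalvectors}. By definition, $\gamma$ is horizontal in $\mathbb{E}$ if and only if there exist controls $u_1,u_2 \in L^1[a,b]$ such that
\[\dot{\gamma}(t) = u_1(t)X_1(\gamma(t)) + u_2(t)X_2(\gamma(t)) \qquad \text{for a.e. } t\in[a,b].\]
Substituting $X_1(x)=\partial_1$ and $X_2(x)=\partial_2 + x_1\partial_3 + \tfrac12 x_1^2\partial_4$ and reading off the four Euclidean components, this vector identity is equivalent to the system
\[\dot{\gamma}_1 = u_1,\qquad \dot{\gamma}_2 = u_2,\qquad \dot{\gamma}_3 = u_2\,\gamma_1,\qquad \dot{\gamma}_4 = \tfrac12 u_2\,\gamma_1^2 \qquad \text{a.e.}\]

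For the forward implication I would assume $\gamma$ horizontal, use the first two equations of the system to identify $u_1 = \dot{\gamma}_1$ and $u_2 = \dot{\gamma}_2$ almost everywhere, and then substitute $u_2 = \dot{\gamma}_2$ into the remaining two equations to obtain exactly $\dot{\gamma}_3 = \gamma_1\dot{\gamma}_2$ and $\dot{\gamma}_4 = \tfrac12\gamma_1^2\dot{\gamma}_2$. For the converse, assuming these two identities, I would simply set $u_1 := \dot{\gamma}_1$ and $u_2 := \dot{\gamma}_2$; these lie in $L^1[a,b]$ since $\gamma$ is absolutely continuous on a compact interval, and with this choice the displayed four-equation system holds by construction (its third and fourth equations being precisely the hypotheses), so $\dot{\gamma} = u_1 X_1(\gamma) + u_2 X_2(\gamma)$ a.e. and $\gamma$ is horizontal.

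The only point meriting a word of care — and it is not a genuine obstacle — is the integrability of the recovered controls: $\dot{\gamma}\in L^1$ because $\gamma$ is absolutely continuous on $[a,b]$, and $\gamma_1$ is continuous hence bounded on $[a,b]$, so the products $u_2\gamma_1$ and $u_2\gamma_1^2$ appearing on the right-hand side are automatically in $L^1$ as well and all identities are meaningful pointwise almost everywhere. Thus the lemma is a direct translation of the definition of horizontality through the vector field expressions \eqref{engelhorizontalvectors}, and I expect the proof to be short.
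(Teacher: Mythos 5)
Your proposal is correct and follows essentially the same route as the paper: substitute the coordinate expressions \eqref{engelhorizontalvectors} into the definition of horizontality, equate components to identify the controls with $\dot{\gamma}_1$ and $\dot{\gamma}_2$, and read off the third and fourth equations (the paper simply states ``the converse is similar'' where you spell out the $L^1$ detail). No issues.
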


\begin{proof}
Suppose $\gamma$ is horizontal in $\mathbb{E}$. Then for almost every $t\in [a,b]$ there are $a(t),b(t)\in \mathbb{R}$ such that $\dot{\gamma}(t)=a(t)X_{1}(\gamma(t))+b(t)X_{2}(\gamma(t))$. Using the representation of $X_{1}$ and $X_{2}$ above gives
\[\dot{\gamma}(t)=a(t)(1,0,0,0)+b(t) \left(0,1,\gamma_{1}(t),\tfrac{1}{2}(\gamma_{1}(t))^{2}\right).\]
Equating the first two components we see $a(t)=\dot{\gamma}_{1}(t)$ and $b(t)=\dot{\gamma}_{2}(t)$. Hence from the third and fourth components we see $\dot{\gamma}_{3}(t)=\gamma_{1}(t)\dot{\gamma}_{2}(t)$ and $\dot{\gamma}_{4}(t)=\frac{1}{2}(\gamma_{1}(t))^{2}\dot{\gamma}_{2}(t)$, as claimed. The converse is similar.
\end{proof}

\section{Whitney extension and Lusin approximation in a subset of directions}
\label{sec-whitney}


The following proposition is an analogue of Proposition 4.3 in \cite{JS16} and the proofs are similar. More details are included here for completeness (due to our variations in language and notation).

For convenience, if $\gamma:[a,b] \to \bbG$ is a differentiable curve, we will say that $\gamma$ is a curve {\em from $(x,X)$ to $(y,Y)$} if $\gamma(a) = x$, $\gamma'(a)=X$, $\gamma(b)=y$, and $\gamma'(b)=Y$. We denote the identity in $\bbG$ by $0$. 

If $Y \in \mathfrak{g}_1$ is pliable, then,
for any $\varepsilon > 0$, there is some $\eta(Y,\varepsilon) > 0$ such that, if
    $Z \in \mathfrak{g}_1$ and $z \in \bbG$ satisfy
    $$
    \|Z - Y\| < \eta(Y,\varepsilon) 
    \quad
    \text{and}
    \quad
    d(z,\exp(Y)) < \eta(Y,\varepsilon),
    $$
    then
    there exists a curve $\gamma \in C^1_H([0,1],\bbG)$ 
    from $(0,Y)$ to $(z,Z)$
    satisfying
    $\Vert \gamma' - Y \Vert_{\infty} < \varepsilon$.
    The following result tells us that, given a compact set of pliable directions, the parameter $\eta$ may be chosen to be the same for every vector in the compact set.

\begin{proposition}
\label{p-unifpliable}
    Suppose $\bbG$ is a Carnot group with horizontal layer $\mathfrak{g}_1$, and $\omega \subset \mathfrak{g}_1$ is a compact set such that every vector in $\omega$ is pliable.
    Then, 
    for any $\varepsilon > 0$,
    there exists $\eta > 0$ such that, for all $W \in \omega$, 
    if $Z \in \mathfrak{g}_1$ and $z \in \bbG$ satisfy 
    $$
    \|Z - W\| < \eta 
    \quad
    \text{and}
    \quad
    d(z,\exp(W)) < \eta,
    $$
    then
    there exists a curve $\gamma \in C^1_H([0,1],\bbG)$ from $(0,W)$ to $(z,Z)$ 
    satisfying
    $\Vert \gamma' - W \Vert_{\infty} < \varepsilon$.
\end{proposition}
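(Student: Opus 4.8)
The plan is to prove a uniform‑in‑$W$ statement locally near each $Y\in\omega$ and then assemble the $\eta$'s via compactness. Fix $\varepsilon>0$ and put $M:=\max_{V\in\omega}\|V\|$. For each $Y\in\omega$, the single‑vector pliability statement recalled just above the proposition (applied with $\varepsilon/3$ in place of $\varepsilon$) provides a number $\rho_Y>0$ such that whenever $Z''\in\mathfrak{g}_1$ and $z''\in\bbG$ satisfy $\|Z''-Y\|<\rho_Y$ and $d(z'',\exp Y)<\rho_Y$, there is a curve $\mu\in C^1_H([0,1],\bbG)$ from $(0,Y)$ to $(z'',Z'')$ with $\|\mu'-Y\|_\infty<\varepsilon/3$. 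The task is then to turn this information, which concerns curves issuing from $(0,Y)$, into a curve issuing from $(0,W)$ for every $W$ close to $Y$.

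Given such a $Y$, and $W\in\omega$ with $\|W-Y\|$ small, and a target $(z,Z)$ with $d(z,\exp W)$ and $\|Z-W\|$ small, I would build the required curve in two pieces. First, a short ``steering'' arc $\alpha\colon[0,\delta_Y]\to\bbG$: the horizontal curve with $\alpha(0)=0$ and Lie‑algebra velocity $\alpha'(t)=(1-t/\delta_Y)\,W+(t/\delta_Y)\tfrac{1}{1-\delta_Y}\,Y$, so that $\alpha'(0)=W$ and $\alpha'(\delta_Y)=\tfrac{1}{1-\delta_Y}Y$; write $p:=\alpha(\delta_Y)$ and note $d(0,p)\le\ell_\bbG(\alpha)\le 2\delta_Y M$ once $\delta_Y\le\tfrac14$. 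Second, a reparametrised left‑translate of a pliability curve: provided $\delta_Y$, the radius $\delta_Y'$ allowed for $\|W-Y\|$, and the radius $\eta_Y$ allowed for $\|Z-W\|$ and $d(z,\exp W)$ are all chosen small enough — using continuity of $\exp$ at $Y$ and of right translation by $\exp Y$ at the identity, together with the identity $d(p^{-1}z,\exp Y)=d(z,p\exp Y)$ — one obtains $d(p^{-1}z,\exp Y)<\rho_Y$ and $\|(1-\delta_Y)Z-Y\|<\rho_Y$, hence a curve $\mu\in C^1_H([0,1],\bbG)$ from $(0,Y)$ to $(p^{-1}z,(1-\delta_Y)Z)$ with $\|\mu'-Y\|_\infty<\varepsilon/3$. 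Set $\tilde\mu(t):=p\cdot\mu\!\big(\tfrac{t-\delta_Y}{1-\delta_Y}\big)$ for $t\in[\delta_Y,1]$ and let $\gamma$ equal $\alpha$ on $[0,\delta_Y]$ and $\tilde\mu$ on $[\delta_Y,1]$. By left‑invariance of the horizontal vector fields, $\tilde\mu'(t)=\tfrac{1}{1-\delta_Y}\mu'\!\big(\tfrac{t-\delta_Y}{1-\delta_Y}\big)$, so $\tilde\mu(\delta_Y)=p=\alpha(\delta_Y)$, $\tilde\mu'(\delta_Y)=\tfrac{1}{1-\delta_Y}Y=\alpha'(\delta_Y)$, $\tilde\mu(1)=z$, and $\tilde\mu'(1)=Z$; hence $\gamma\in C^1_H([0,1],\bbG)$ is a curve from $(0,W)$ to $(z,Z)$. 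For the velocity bound one checks that on $[0,\delta_Y]$ the velocity stays within $\tfrac{\delta_Y}{1-\delta_Y}M+\delta_Y'$ of $W$, and on $[\delta_Y,1]$ within $\tfrac{\delta_Y}{1-\delta_Y}\big(M+\tfrac{\varepsilon}{3}\big)+\tfrac{\varepsilon}{3}+\delta_Y'$ of $W$ (using $\|\mu'-Y\|_\infty<\varepsilon/3$), both of which are $<\varepsilon$ once $\delta_Y$ and $\delta_Y'$ are small. This yields, for each $Y\in\omega$, radii $\delta_Y'>0$ and $\eta_Y>0$ such that the conclusion of the proposition holds for every $W\in\omega$ with $\|W-Y\|<\delta_Y'$, with $\eta_Y$ in place of $\eta$.

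Finally, the collection $\{B(Y,\delta_Y')\}_{Y\in\omega}$ covers the compact set $\omega$; take a finite subcover $B(Y_1,\delta_{Y_1}'),\dots,B(Y_N,\delta_{Y_N}')$ and set $\eta:=\min_{1\le i\le N}\eta_{Y_i}>0$. Any $W\in\omega$ lies in some $B(Y_i,\delta_{Y_i}')$, and the local statement at $Y_i$ applies because $\eta\le\eta_{Y_i}$, which finishes the proof.

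I expect the main obstacle to be handling the reparametrisation cleanly: compressing the pliability curve $\mu$ from $[0,1]$ into the subinterval $[\delta_Y,1]$ multiplies its velocity by $\tfrac{1}{1-\delta_Y}$, which forces the steering arc $\alpha$ to terminate with velocity $\tfrac{1}{1-\delta_Y}Y$ rather than $Y$ (so that the glued curve is genuinely $C^1$) and also slightly inflates the velocity, which must remain under $\varepsilon$. A secondary delicate point is the order of choices — $\rho_Y$ must be fixed from pliability of $Y$ before $\delta_Y$, $\delta_Y'$, and $\eta_Y$, and one must exploit that $M$ bounds $\|W\|$ (and hence $\|Z\|$) uniformly over all admissible data — whereas the individual estimates themselves are routine.
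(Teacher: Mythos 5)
Your proof is correct, and its overall architecture matches the paper's: a short steering arc that interpolates the velocity from $W$ toward $Y$, followed by a transplanted pliability curve for $Y$, with the local radii assembled over $\omega$ by a finite subcover. The genuine difference is in how the pliability curve is transplanted onto the remaining interval. The paper combines the time-compression with the dilation $\delta_{1-\rho}$, so that (by the identity (T1) of Juillet--Sigalotti) the Lie-algebra velocity of the second piece is unchanged; the price is that the endpoint becomes $x_\rho\,\delta_{1-\rho}(a)$, so the target of the pliability curve must be taken to be $a_\rho=\delta_{(1-\rho)^{-1}}(x_\rho^{-1}z)$, and one must verify $d(a_\rho,z)<\tfrac12\eta(Y_0,\varepsilon)$, which the paper does by a limiting argument with $\rho$ chosen \emph{after} fixing $(W,Z,z)$. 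You avoid dilations entirely: you only left-translate by $p=\alpha(\delta_Y)$, so the spatial target is simply $p^{-1}z$, shown to be $\rho_Y$-close to $\exp Y$ via left-invariance, continuity of $\exp$, and continuity of right translation by $\exp Y$; the price is the velocity inflation factor $\tfrac{1}{1-\delta_Y}$ coming from the time-compression, which you absorb by ending the steering arc at velocity $\tfrac{1}{1-\delta_Y}Y$ and aiming the pliability curve at terminal velocity $(1-\delta_Y)Z$, and which forces you to fix $\delta_Y$ in advance, uniformly over the admissible data, using the bound $M=\max_{V\in\omega}\|V\|$. Both routes are sound: the paper's dilation trick keeps the velocity bookkeeping trivial, while yours is more elementary (no dilation identity needed, and the target point is computed directly) at the cost of tracking the $\tfrac{1}{1-\delta_Y}$ factors and the uniform choice of $\delta_Y$.
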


\begin{proof}
    Fix $\varepsilon > 0$. We will prove the proposition using the following claim via the compactness of $\omega$:
    
    \medskip

    \noindent \textbf{Claim.}
    {\em
    For any $Y_0 \in \omega$, there is some 
    $\nu_{Y_0} > 0$
    such that, 
    if $W,Z \in \mathfrak{g}_1$ and $z \in \mathbb{G}$ satisfy
    $$
    \Vert W - Y_0 \Vert < \nu_{Y_0},
    \quad
    \Vert Z - Y_0 \Vert < \nu_{Y_0},
    \quad
    \text{and}
    \quad
    d(z,\exp(Y_0)) < \nu_{Y_0},
    $$
    then
    there exists a curve $\gamma \in C^1_H([0,1],\bbG)$ 
    from $(0,W)$ to $(z,Z)$
    satisfying
    $\Vert \gamma' - W \Vert_{\infty} < \varepsilon$.
    }

    \medskip

    \noindent \emph{Proof of Proposition from the Claim:} 
    For each $Y_0 \in \omega$, choose $\nu_{Y_0}$ to be the parameter guaranteed by the claim.
    Since $\exp:\mathfrak{g} \to \bbG$ is a diffeomorphism and the Euclidean and CC distances generate the same topology, we may find some 
    $r_{Y_0} \leq \nu_{Y_0}$ so that $d(\exp(W),\exp(Y_0)) < \tfrac12 \nu_{Y_0}$ for all $W \in B_{\mathfrak{g}_1}(Y_0,r_{Y_0})$. 
    Consider the cover $\{B_{\mathfrak{g}_1}(Y_0,\tfrac12 r_{Y_0}) : Y_0 \in \omega\}$ of $\omega$, and choose a finite subcover $\{B_{\mathfrak{g}_1}(Y_i,\tfrac12r_i)\}_{i=1}^I$, where $r_{i}:=r_{Y_{i}}$.
    Set $\eta =  \min \{\tfrac12 r_i: i=1,\dots, I \}$. 
    
    Fix any $W \in \omega$, $Z \in B_{\mathfrak{g}_1}(W,\eta)$, and $z \in B_{\bbG}(\exp(W),\eta)$. Using the definition of a subcovering, we can choose $k \in \{1,\dots,I\}$ such that $\Vert W - Y_k \Vert < \tfrac12 r_k < \nu_{Y_k}$. Since $\eta\leq \tfrac12 r_{k} \leq \tfrac12 \nu_{Y_k}$, it follows that $\Vert Z - Y_k \Vert < \nu_{Y_k}$. 
    Moreover, 
    $$
    d(z,\exp(Y_k)) \leq d(z,\exp(W)) + d(\exp(W),\exp(Y_k)) < \eta + \tfrac12 \nu_{Y_k} \leq \nu_{Y_k}
    $$
    since $\|W-Y_k\|< r_{k}$. 
    According to the claim, then, there exists a curve $\gamma \in C^1_H([0,1],\bbG)$ 
    from $(0,W)$ to $(z,Z)$
    satisfying
    $\Vert \gamma' - W \Vert_{\infty} < \varepsilon$ which proves the proposition.


    \medskip

    \noindent
\emph{Proof of the Claim:} Fix $Y_0 \in \omega$.
Using the pliability of $Y_0$, choose $\eta(Y_0,\varepsilon) > 0$ such that, if
    $Z \in \mathfrak{g}_1$ and $z \in \bbG$ satisfy
    $$
    \|Z - Y_0\| < \eta(Y_0,\varepsilon) 
    \quad
    \text{and}
    \quad
    d(z,\exp(Y_0)) < \eta(Y_0,\varepsilon),
    $$
    then
    there exists a curve $\gamma \in C^1_H([0,1],\bbG)$ 
    from $(0,Y_0)$ to $(z,Z)$
    satisfying
    $\Vert \gamma' - Y_0 \Vert_{\infty} < \tfrac{\varepsilon}{4}$.

    As above, we may 
    choose $r_{Y_0,\varepsilon} >0$ so that, for all $Y \in \mathfrak{g}_1$ with $\|Y-Y_0\| < r_{Y_0,\varepsilon}$, we have
    $$
    d(\exp(Y),\exp(Y_0)) < \tfrac18 \eta(Y_0,\varepsilon).
    $$
    
    Set $\nu := \nu_{Y_0} := \min\{ \tfrac{\varepsilon}{4}, \tfrac18 \eta(Y_0,\varepsilon), r_{Y_0,\varepsilon}\}$, and fix $W,Z \in B_{\mathfrak{g}_1}(Y_0,\nu)$ and $z \in B_{\mathbb{G}}(\exp(Y_0),\nu)$.
    We must construct a curve as in the claim.
    For any $a \in \mathbb{G}$ with $d(a,z) < \tfrac12 \eta(Y_0,\varepsilon)$,
    our hypothesis guarantees that
    there is
    a curve $\gamma_a \in C^1_H([0,1],\bbG)$ 
    from $(0,Y_0)$ to $(a,Z)$
    satisfying
    $\Vert \gamma_a' - Y_0 \Vert_{\infty} < \tfrac{\varepsilon}{4}$.

    For any $0< \rho < 1$, 
    we can
    define a curve $\gamma_\rho:[0,\rho] \to \mathbb{G}$ so that
    $\gamma_\rho(0)=0$ and
    $$
    \gamma'_{\rho}(t) = 
    \tfrac{1}{\rho} (tY_0 + (\rho-t)W)\quad \mbox{for all }t\in [0,\rho].
    $$
    Notice that this curve does not depend on the choice of $a$. Write $x_\rho := \gamma_{\rho}(\rho)$.
    Define now the curve 
    $\gamma_{a,\rho}:[0,1]\to\mathbb{G}$ as
    $$
    \gamma_{a,\rho}(t) = 
    \begin{cases}
    \gamma_\rho(t) & \text{ for } t \in [0,\rho]\\
    x_\rho \delta_{1-\rho}(\gamma_a(\tfrac{1}{1-\rho} (t - \rho))) & \text{ for } t \in [\rho,1]
    \end{cases}
    $$
    so that 
    $\gamma_{a,\rho}'(t) = \gamma_a'(\tfrac{1}{1-\rho} (t - \rho))$ for $t \in [\rho,1]$
    (see (T1) on page 1646 of \cite{JS16}).
    In particular, $\gamma_{a,\rho}'(\rho)$ exists and is equal to $Y_0$.
    It follows that $\gamma_{a,\rho} \in C^1_H([0,1],\mathbb{G})$ 
    is a curve from $(0,W)$ to $(x_\rho \delta_{1-\rho}(a),Z)$.
    Moreover, if $t\in[0,\rho]$ we have
    \[     \Vert \gamma_{a,\rho}'(t) - W \Vert = \Vert \tfrac{1}{\rho}(tY_{0}-tW) \Vert \leq \|W-Y_{0}\| < \tfrac{\varepsilon}{4}. \]
If $t\in [\rho,1]$ we have
    $$
    \Vert \gamma_{a,\rho}'(t) - W \Vert
    \leq 
    \Vert \gamma_a' - Y_0 \Vert_\infty + \|W - Y_0\|
    < \tfrac{\varepsilon}{2},
    $$
Combining these gives $\Vert \gamma_{a,\rho}' - W \Vert_\infty < \varepsilon$.
    
    To ensure that the curve $\gamma_{a,\rho}$ is the curve required in the claim, we must be able to choose $a$ in such a way that $x_\rho  \delta_{1-\rho}(a) = \gamma_{a,\rho}(1) = z$.
    In other words, if we can choose some $\rho$ so that the point $a_\rho := \delta_{(1-\rho)^{-1}} (x_\rho^{-1}  z)$ satisfies 
    $d(a_\rho,z) < \tfrac12 \eta(Y_0,\varepsilon)$,
    then we may conclude
    $\gamma_{a_\rho,\rho}(1) = x_\rho  \delta_{1-\rho}(\delta_{(1-\rho)^{-1}} (x_\rho^{-1}  z)) = z$, and hence $\gamma_{a_\rho,\rho}$ is the desired curve.
    
    It remains to prove that such a $\rho$ exists i.e. that $\rho>0$ may be chosen with
    \[d(\delta_{(1-\rho)^{-1}} (x_\rho^{-1}  z),z) < \tfrac12 \eta(Y_0,\varepsilon).\]
    To see that this is possible, notice that $d(0,x_\rho) \leq \rho \max\{ \|W\|,\|Y_0\|\}$, which implies $x_{\rho}\to 0$ as $\rho \to 0$. Hence
\begin{align*} 
d( \delta_{(1-\rho)^{-1}} (x_\rho^{-1}  z) , z)&= \tfrac{1}{1-\rho} d( x_{\rho}^{-1}z,\delta_{1-\rho}(z))\\
&\leq \tfrac{1}{1-\rho}\left( d(x_{\rho}^{-1}z,z)+d(z,\delta_{1-\rho}(z)) \right)
\longrightarrow 0
\quad
\text{ as } \rho \to 0
\end{align*}
since $x_{\rho}\to 0$ and $\delta_{1-\rho}(z)\to z$ as dilations are continuous with respect to the scaling parameter.
\end{proof}

Before proving the main ``partial Whitney'' result, we will verify that ``Carnot Whitney'' implies ``classical Whitney'' at 0 when viewed in exponential coordinates of the first kind. Note that that, in these coordinates, the identity is $0$ and $g^{-1}=-g$ for any $g\in \bbG$. Recall that $|\cdot|$ denotes the Euclidean norm on $\mathbb{R}^n$.

    \begin{lemma}
    \label{l-toostrong}
        Suppose $K \subset \mathbb{R}$ is compact and $f:K\to \bbG$ and $X:K \to \mathfrak{g}_1$ are continuous. Then there is a constant $C>0$ 
        such that the following implication holds.
        
        Suppose $\varepsilon>0$ and $x,y \in K$ satisfy $0<y-x < 1$, $f(x)=0$, and
        \begin{equation}
        \label{e-hypothesis2}
                    \frac{d(f(y),\exp((y-x)X(x)))}{y-x} < \varepsilon.
        \end{equation}
Then
        $$
    \frac{|f(y) - \exp((y-x)X(x))|}{y-x}
    < C\varepsilon.
    $$
    Note that $C$ is independent of $x$, $y$, and $\varepsilon$. Here, as throughout this section, we view $\bbG$ in exponential coordinates of the first kind.
    \end{lemma}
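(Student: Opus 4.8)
The plan is to pass between the CC-metric $d$ and the Euclidean metric near the identity $0$, using homogeneity of both metrics under dilations to get a bound with a constant independent of $x$ and $y$. First I would observe that by hypothesis \eqref{e-hypothesis2}, writing $t = y-x \in (0,1)$ and $p := \exp(tX(x))$, we have $d(f(y), p) < \varepsilon t$. Since $f(x)=0$, the point $f(y)$ lies within CC-distance $\varepsilon t$ of $p$, and $p = \exp(tX(x))$ has $d(0,p) = t\,d(0,\exp(X(x))) \leq Lt$ where $L := \sup_{x \in K}\|X(x)\| \cdot \sup\{d(0,\exp V): \|V\|\le 1\}$ is finite by compactness of $K$ and continuity of $X$ (one can also just bound $d(0,\exp(tX(x)))$ directly since $\{X(x): x\in K\}$ is a compact subset of $\mathfrak g_1$). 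Hence $f(y)$ stays in a fixed bounded neighborhood of $0$ (say the closed CC-ball of radius $L+1$), and likewise $p$ does.

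Next I would use the comparison with a homogeneous Euclidean-type gauge. On the fixed compact set $\overline{B_d(0,L+1)}$, both $d(0,\cdot)$ and the Box-norm $\|\cdot\|_{Box}$ (or any homogeneous norm) are comparable, and $\|\cdot\|_{Box}$ in turn controls the Euclidean distance via the coordinate-wise estimate $|\hat x_k| \le \lambda_k^{-k}\|x\|_{Box}^k$: on a bounded set the worst term is the one with the smallest power, giving $|q - q'| \lesssim \|q^{-1}q'\|_{Box} \lesssim d(q,q')$ for $q,q'$ in that bounded neighborhood, provided $d(q,q')$ is bounded. But here I want the estimate to scale correctly in $t$, so rather than working at unit scale I would rescale: apply $\delta_{1/t}$. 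Set $\tilde f := \delta_{1/t}(f(y))$ and $\tilde p := \delta_{1/t}(p) = \exp(X(x))$. Then $d(\tilde f, \tilde p) = \tfrac1t d(f(y),p) < \varepsilon$, and since $\varepsilon$ may be assumed small (if $\varepsilon \ge 1$ the conclusion is trivial after enlarging $C$, since $|f(y)-p| \le |f(y)| + |p| \le C'(1+L)\cdot(y-x)$ using homogeneity bounds again), the point $\tilde f$ lies in a fixed bounded neighborhood of the compact set $\{\exp(X(x)): x\in K\}$. On that fixed bounded set the Euclidean metric is controlled by the CC-metric: there is $C_1>0$ with $|\tilde f - \tilde p| \le C_1 d(\tilde f, \tilde p) < C_1 \varepsilon$.

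Finally I would undilate. Since $\tilde f - \tilde p$ has Euclidean coordinates that are $(\tilde f)_i - (\tilde p)_i$, and $f(y) - p$ has coordinates $t^{d_i}((\tilde f)_i - (\tilde p)_i)$, and each $d_i \ge 1$ with $0 < t < 1$, we get $|f(y) - p| \le \big(\sum_i t^{2d_i}|(\tilde f)_i - (\tilde p)_i|^2\big)^{1/2} \le t\,|\tilde f - \tilde p| < C_1 \varepsilon\, t = C_1 \varepsilon (y-x)$, which is the claim with $C = C_1$. The main obstacle, and the only place needing care, is making sure the constant $C_1$ relating Euclidean and CC distances is uniform — this requires confining $\tilde f$ and $\tilde p$ to a single fixed compact set independent of $x,y,\varepsilon$, which is why I first dispose of the case $\varepsilon \ge 1$ separately and then use smallness of $\varepsilon$ together with compactness of $K$ (hence of $X(K)$) to trap the rescaled points; the inequality $d(0,\exp(V)) \le c\|V\|$ for $V$ in a bounded set, plus left-invariance and the triangle inequality, does the trapping. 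Everything else is bookkeeping with the dilation weights $d_i \ge 1$.
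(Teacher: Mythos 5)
Your route (rescale by $\delta_{1/(y-x)}$, then compare Euclidean and CC distances on a fixed compact set) is genuinely different from the paper's proof, which never invokes a local metric comparison but instead bounds the difference quotient coordinate-by-coordinate, by induction on the homogeneity, using the box-norm estimate \eqref{e-supmetric} together with the structure of the group law in Proposition~\ref{p-structure}. Your scheme can be made to work, but as written it has two concrete soft spots, and both sit exactly at the point the paper's induction is designed to handle. First, the inequality you lean on, $|q-q'|\leq C\,d(q,q')$ for $q,q'$ in a fixed bounded set with $d(q,q')$ bounded, does not follow from the coordinate estimate $|\hat h_k|\leq \lambda_k^{-k}\Vert h\Vert_{Box}^k$ alone: that estimate controls the coordinates of the group element $h=q^{-1}q'$, not the Euclidean difference $q-q'$, and the two differ by the nonlinear left translation $L_q$. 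You need, in addition, that $h\mapsto qh$ is Lipschitz on bounded sets with a constant uniform over $q$ in a compact set (true because the group law is polynomial, but it must be stated and used); the correction terms this produces are precisely the polynomials $Q_k$ that the paper estimates by hand via its induction. Second, your disposal of the case $\varepsilon\geq 1$ is incorrect as written: the bound $|f(y)|+|p|\leq C'(1+L)(y-x)$ is false in general, since from \eqref{e-hypothesis2} one only gets $d(0,f(y))<(\varepsilon+L)(y-x)$, so any Euclidean bound on $|f(y)|$ must carry the factor $\varepsilon$; boundedness of $f(K)$ gives nothing of the form $O(y-x)$ when $y-x$ is small.

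Both issues are repairable by one and the same fact: $f(K)$ and $\{\exp(tX(x)):t\in[0,1],\,x\in K\}$ lie in a single fixed compact set, and on such a set $|q-q'|\leq C\,d(q,q')$ holds uniformly (box norm controls the coordinates of $q^{-1}q'$, bounded homogeneous norm lets you reduce the powers $d_i\geq 1$ to power one, and uniform Lipschitzness of left translation converts this into a bound on $q-q'$). But note that once you have this comparison in hand, the dilation step buys you nothing: applying it directly to $q=f(y)$, $q'=\exp((y-x)X(x))$ and \eqref{e-hypothesis2} gives $|f(y)-\exp((y-x)X(x))|\leq C\,d(f(y),\exp((y-x)X(x)))<C\varepsilon(y-x)$, which is the lemma, with no case split on $\varepsilon$ and no rescaling. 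So the honest content of your argument is the compact-set comparison itself, and a complete write-up would either prove it (which amounts to redoing the paper's bookkeeping with $Q_k$, or an equivalent argument via lengths of horizontal curves) or cite it; the dilation and the $\varepsilon\geq 1$ reduction, as currently phrased, obscure rather than supply that missing step.
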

    
    \begin{proof}
    Throughout this proof, we will simplify notation by writing $X := X(x)$.
Using the hypothesis and the fact that $d$ is bi-Lipschitz equivalent to the box metric $d_{Box}$ on $\bbG$ (in the sense of \eqref{e-supmetric}), there is a constant $C>0$ independent of $x$, $y$, and $\varepsilon$ such that
\begin{align}
 \Vert f(y)^{-1}\exp((y-x)X) \Vert_{Box} 
 =
 d_{Box}(\exp((y-x)X),f(y))
 &\leq  C d(\exp((y-x)X),f(y)) \nonumber\\
 &< C\varepsilon (y-x). \label{e-box}
\end{align}
Using the fact that the group operation is equal to Euclidean addition in the first $r$ coordinates (Proposition~\ref{p-structure}) and the definition of $\Vert \cdot \Vert_{Box}$, we have for some possibly larger constant
    $$
    \left| \frac{f_k(y) - \exp((y-x)X)_k}{y -x} \right| < C\varepsilon
    $$
    for $k \leq r$.
    Suppose $k>r$ and that we have proven for all $j <k$ that 
    $$
    \left| \frac{f_j(y) - \exp((y-x)X)_j}{y -x} \right| < C \varepsilon
    $$
    for some constant $C$ depending only on $f$, $X$, $K$, and the coordinate representation of $\mathbb{G}$. We show how to obtain the same estimate for $j=k$ and hence prove the statement by induction. Since $X \in \mathfrak{g}_1$, we have $\exp((y-x)X)_k = 0$. Thus we need only to bound 
    $\left| \frac{f_k(y)}{y -x} \right|$
    in terms of $\varepsilon$.
    Using Proposition~\ref{p-structure}, we have
    \begin{align}
    \left| \frac{f_k(y)}{(y -x)} \right| 
    &\leq
    \left| \frac{f_k(y) - Q_k(f(y)^{-1},\exp((y-x)X))}{(y -x)^{d_k}} \right| (y-x)^{d_k-1} \label{e-easybound}\\
    &\qquad +
    \left| \frac{Q_k(f(y)^{-1},\exp((y-x)X))}{y-x} \right|\label{e-alsoeasybound}
        \end{align}
      where $d_k$ is the homogeneity of coordinate $k$. 
    According to \eqref{e-box}, the term on the right of \eqref{e-easybound} is bounded by $C \varepsilon^{d_k} (y-x)^{d_k-1} < C\varepsilon$.
    To bound \eqref{e-alsoeasybound},
    recall from Proposition~\ref{p-structure} that $Q_k(f(y)^{-1},\exp((y-x)X))$ is a sum of polynomials each of which contains a factor of the form
    \begin{align*}
    \exp((y-x)X)_i f_\ell(y)
    &-
    \exp((y-x)X)_\ell f_i(y)\\
    &=
    [\exp((y-x)X)_i - f_i(y)] f_\ell (y)
    -
    [\exp((y-x)X)_\ell - f_\ell (y)] f_i(y)
    \end{align*}
    for some $i,\ell < k$.
    According to the induction hypothesis, then, we may bound \eqref{e-alsoeasybound} by $C\varepsilon$ for a possibly larger $C$ depending on the same data.
    \end{proof}

   From here, we can conclude that ``Carnot Whitney'' implies ``classical Whitney'' everywhere in a compact set $K$. Below, we will refer to a ``modulus of continuity'' $\alpha$. By this, we simply mean a non-decreasing function $\alpha~\colon~[0,\infty)~\to~[0,\infty)$ that is continuous at $0$ and $\alpha(0)=0$.
   We say that $f:\mathbb{R}^m \supset A \to \mathbb{R}^k$ is uniformly continuous with modulus of continuity $\alpha$ if $|f(b)-f(a)| \leq \alpha(|b-a|)$ for all $a,b \in A$. 

    \begin{lemma}
    \label{l-toostrong2}
        Suppose $K \subset \mathbb{R}$ is compact and $f:K\to \bbG$ and $X:K \to \mathfrak{g}_1$ are continuous. Then there is a modulus of continuity $\alpha$ 
        such that the following implication holds. 
        
        Suppose $0<\varepsilon<1$ and
         $x,y \in K$ satisfy $0<y-x < \varepsilon$ and
        \begin{equation}
        \label{e-hypothesis}
                    \frac{d(f(y),f(x)\exp((y-x)X(x)))}{y-x} < \varepsilon.
        \end{equation}
        Then
        $$
    \frac{|f(y) - f(x) - (y-x)X(x)(f(x))|}{y-x}
    < \alpha(\varepsilon).
    $$
        Note that $\alpha$ is independent of $x$, $y$, and $\varepsilon$. Here, as throughout this section, we view $\bbG$ in exponential coordinates of the first kind.
    \end{lemma}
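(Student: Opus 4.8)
The strategy is to reduce to the already-settled case $f(x)=0$ (Lemma~\ref{l-toostrong}) by left-translating $f$, apply that lemma, translate back, and then replace $\exp((y-x)X(x))$ by its first-order Euclidean expansion $f(x)+(y-x)X(x)(f(x))$ using the polynomial form of the group law in Proposition~\ref{p-structure}.

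Concretely, I would fix $x,y\in K$ with $0<y-x<\varepsilon<1$ satisfying \eqref{e-hypothesis}, abbreviate $w:=y-x$ and $V:=X(x)\in\mathfrak g_1$, and set $g_x:=f(x)^{-1}f\colon K\to\bbG$, which is continuous with $g_x(x)=0$. Since $d$ is left-invariant, \eqref{e-hypothesis} says exactly $d(g_x(y),\exp(wV))<\varepsilon w$, which is hypothesis~\eqref{e-hypothesis2} of Lemma~\ref{l-toostrong} for the pair $(g_x,X)$; that lemma then gives $|g_x(y)-\exp(wV)|<C\varepsilon w$. The one point needing care is that $g_x$ depends on $x$, whereas Lemma~\ref{l-toostrong} nominally produces a constant depending on the map. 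However, inspecting its proof, the dependence on the curve enters only through a bound on the components $|(g_x)_\ell(y)|$ used to estimate the polynomials $Q_k$, and $g_x(K)$ is contained in the single compact set $\{f(s)^{-1}f(t):s,t\in K\}$ for every $x$, with $X$ unchanged; hence $C$ may be chosen independent of $x$. I expect this uniformity to be the main (though mild) obstacle.

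It then remains to convert $|g_x(y)-\exp(wV)|<C\varepsilon w$ into the stated estimate. Writing $ab=a+b+Q(a,b)$ and recalling that in first exponential coordinates $\exp(wV)$ has coordinate vector $wV$ and $g^{-1}=-g$, I would expand
\[
f(y)-f(x)\exp(wV)=\bigl(g_x(y)-wV\bigr)+\bigl(Q(f(x),g_x(y))-Q(f(x),wV)\bigr).
\]
Both $g_x(y)$ and $wV$ lie in a fixed bounded neighbourhood of $0$ (its size depending only on $\sup_K\|X\|$ and $\bbG$), on which the polynomial $q\mapsto Q(f(x),q)$ is Lipschitz with constant uniform in $x\in K$ since $f(K)$ is compact; this yields $|f(y)-f(x)\exp(wV)|<C'\varepsilon w$. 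Finally, using that the left-invariant field satisfies $X_j(p)=e_j+\partial_{q_j}Q(p,0)$, one gets $X(x)(f(x))=V+D_2Q(f(x),0)[V]$ (here $D_2$ denotes differentiation in the second argument), whence
\[
f(x)\exp(wV)-\bigl(f(x)+wX(x)(f(x))\bigr)=Q(f(x),wV)-w\,D_2Q(f(x),0)[V],
\]
which is precisely the degree-$\geq 2$ Taylor remainder of $q\mapsto Q(f(x),q)$ at $0$ evaluated at $wV$, and is therefore bounded by $Cw^2<Cw\varepsilon$, using $w<\varepsilon$ together with the boundedness of the relevant derivatives of $Q$ over the compact set $f(K)$. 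Adding the two estimates and dividing by $w=y-x$ gives the conclusion with $\alpha(\varepsilon):=(C+C')\varepsilon$, which is visibly a modulus of continuity independent of $x$, $y$, and $\varepsilon$.

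Besides the uniformity of all the constants over the translates $g_x$ and over $p=f(x)\in f(K)$ — handled by compactness of $f(K)$, continuity of $X$, and a look at the proof of Lemma~\ref{l-toostrong} — the remaining work is routine polynomial bookkeeping with the components $Q_k$.
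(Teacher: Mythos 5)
Your argument is correct, and after the common first step it diverges from the paper's proof in an interesting way. Both proofs begin identically: left-translate to $g_x=f(x)^{-1}f$, note that left-invariance of $d$ turns \eqref{e-hypothesis} into the hypothesis \eqref{e-hypothesis2} of Lemma~\ref{l-toostrong}, and apply that lemma; both also need the constant there to be uniform in $x$, which you address explicitly via the compact set $\{f(s)^{-1}f(t):s,t\in K\}$ (the paper asserts this uniformity implicitly when it says the constant depends only on $f$, $X$, $K$). From that point the paper writes the target quantity as $|L_{f(x)}(g(y))-L_{f(x)}(g(x))-D_0L_{f(x)}(\exp((y-x)X))|/(y-x)$ and controls it by the uniform convergence of difference quotients of the multiplication polynomial $R(a,b)=L_a(b)$ on $f(K)\times g(K)$ together with the uniform continuity of $g$; this is what produces the (a priori nonlinear) modulus $\hat\alpha$. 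You instead expand the group law directly, bounding $|f(y)-f(x)\exp(wV)|$ by the Lipschitz dependence of $Q(f(x),\cdot)$ on its second argument, and then compare $f(x)\exp(wV)$ with $f(x)+wX(x)(f(x))$ through the identity $X(x)(f(x))=V+D_2Q(f(x),0)[V]$, so the discrepancy is exactly the degree-two Taylor remainder of $Q(f(x),\cdot)$ at $0$, of size $O(w^2)\le O(w\varepsilon)$ since $y-x<\varepsilon$. This buys you something slightly stronger than the stated lemma: a linear modulus $\alpha(\varepsilon)=C\varepsilon$, with no appeal to a modulus of continuity of $f$ beyond compactness bounds, whereas the paper's route only uses $y-x<\varepsilon$ through the uniform continuity of $g$ and yields a general modulus. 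Your bookkeeping (in particular $Q(p,0)=0$, the coordinate vector of $\exp(wV)$ being $wV$, and the uniform bounds over $f(K)$ and $\sup_K\|X\|$) is sound, so the proposal stands as a valid, marginally sharper alternative.
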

\begin{proof}
        Write $g(t) = f(x)^{-1} f(t)$ for all $t \in K$. 
        In particular, this gives $g(x) = 0$ and
        \begin{equation}
         \label{e-gdq}
        \frac{d(g(y),\exp((y-x)X(x)))}{y-x} 
        =
        \frac{d(f(y),f(x)\exp((y-x)X(x)))}{y-x}
        < \varepsilon.
                \end{equation}
The left invariance of $X(x) \in \mathfrak{g}_1$ and the fact that $X(x)(0)$ is identified with $\exp(X(x))$ in exponential coordinates of the first kind implies that
    $$
    X(x)(f(x)) 
    =
    D_0 L_{f(x)}(\exp(X(x)))
    $$
    where $L_p$ is the operation of left multiplication by $p \in \bbG$.  If $f(y) = f(x)$, then the conclusion is immediate from this point. 
    Assume $f(y) \neq f(x)$.
    Then, 
    (writing $L$ in place of $L_{f(x)}$, $D$ in place of $D_{0}$, and $X$ in place of $X(x)$) we have
    \begin{align}
        &\frac{|f(y) - f(x) - (y-x)X(f(x))|}{y-x} \nonumber \\
        & \qquad 
        =
        \frac{|L( g(y)) - L( g(x)) - 
        DL(\exp((y-x)X))
        |}{y-x} \nonumber \\
        & \qquad
        \leq
        \frac{|L( g(y)) - L( g(x)) - 
        DL(g(y)-g(x))
        |}{|g(y)-g(x)|}\cdot
        \frac{|g(y)-g(x)|}{|y-x|} \label{e-first2} \\
        & \qquad \qquad \qquad \qquad \qquad \qquad
        +
        \left|DL\left(\frac{g(y) - 
        \exp((y-x)X)
        }{y-x}\right)\right| \nonumber
    \end{align}

    Notice that $R:\mathbb{R}^n \times \mathbb{R}^n \to \mathbb{R}^n$ defined as $R(a,b) = L_a(b)$ is a polynomial. Hence the norm of the matrix $D_0 L_{z}$ is uniformly bounded for all $z$ in the compact set $f(K)$.
    It follows from this and from \eqref{e-gdq} and Lemma~\ref{l-toostrong} that
    the last term above is bounded by a constant multiple of $\varepsilon$ where the constant depends only on $f$, $X$, $K$, and the coordinate representation of $\mathbb{G}$.  
    
    We will now verify that \eqref{e-first2} is controlled as well.
    The second term \eqref{e-first2} is bounded by a constant independent of $x$ and $y$ due to Lemma~\ref{l-toostrong}.
    Notice also that the difference quotients of the polynomial $R$ converge uniformly on the compact set $f(K) \times g(K)$. 
    Indeed, if $B \subset \mathbb{R}^{2n}$ is any ball containing $f(K) \times g(K)$, then, for each $k \in \{ 1,\dots,n\}$, the gradient $\nabla R_k$ is uniformly continuous on $B$ with a modulus of continuity $\beta$ depending on $B$ (which can be made to depend only on $f$, $K$, and the coordinate representation of $\mathbb{G}$). 
    Hence, for any $a \in f(K)$ and $b,c \in g(K)$, we have 
    \begin{align*}
        |R_k(a,b) - R_k(a,c) &- \nabla R_k (a,c) \cdot ((a,b)-(a,c))|\\
        &=
         \left| \int_0^1 \nabla R_k (\gamma(t)) \cdot \gamma'(t)  - \nabla R_k (a,c) \cdot ((a,b)-(a,c)) \, dt\right|\\
         &\leq
         |(a,b)-(a,c)| \int_0^1 |\nabla R_k(\gamma(t)) - \nabla R_k(a,c)| \, dt\\
         & \leq
        |b-c| \beta(|b-c|),
    \end{align*}
    where $\gamma:[0,1] \to \mathbb{R}^{2n}$ is the segment from $(a,c)$ to $(a,b)$.
    Setting $a=f(x)$, $b=g(y)$, and $c=g(x)$, the above estimate shows that the $k$'th component of $L( g(y)) - L( g(x)) - 
        DL(g(y)-g(x))$ has absolute value bounded by $|g(y)-g(x)|\beta(|g(y)-g(x)|)$. 
        We then apply the uniform continuity of $g$ on $K$ to conclude that 
    there is a modulus of continuity $\hat{\alpha}$ depending only on $f$, $X$, $K$, and the coordinate representation of $\mathbb{G}$ such that
    the first term in \eqref{e-first2} is bounded by $\hat{\alpha}(\varepsilon)$. 
    The desired modulus of continuity is then obtained by scaling $\hat{\alpha}$ by a constant. 
\end{proof}

We are now ready to prove Theorem~\ref{t-partialwhitney}. We restate it here for convenience.

\begin{theorem*}[Restatement of Theorem~\ref{t-partialwhitney}]
    Suppose $\bbG$ is a Carnot group with horizontal layer $\mathfrak{g}_1$, and fix $\mathfrak{O} \subset \mathfrak{g}_1$.
    If every vector in $\mathfrak{O}$ is pliable, then $\bbG$ has the $C^1_H$ Whitney extension property on $\mathfrak{O}$.
\end{theorem*}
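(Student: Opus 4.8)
The plan is to run the classical Whitney extension scheme intrinsically: fill the gaps of $K$ with honestly horizontal $C^1$ curves supplied by Proposition~\ref{p-unifpliable}, and use Lemma~\ref{l-toostrong2} to see that the assembled curve is Euclidean-differentiable at the points of $K$. Since $X$ is continuous and $K$ is compact, $\omega := X(K) \subset \mathfrak{O}$ is compact, so every vector in $\omega$ is pliable and Proposition~\ref{p-unifpliable} applies to $\omega$. Discarding the trivial case $K = \emptyset$, put $a = \min K$ and $b = \max K$, and on $(-\infty,a]$ and $[b,\infty)$ let $\Gamma$ be the horizontal lines $t \mapsto \gamma(a)\exp((t-a)X(a))$ and $t\mapsto \gamma(b)\exp((t-b)X(b))$, which are $C^1_H$, agree with $\gamma$ at $a$ and $b$, and have constant $\mathfrak{g}_1$-derivatives $X(a)$ and $X(b)$ there. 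Writing $[a,b]\setminus K = \bigsqcup_i(a_i,b_i)$ (a countable disjoint union with all $a_i,b_i \in K$), it then remains to define $\Gamma$ on each gap so that the pieces glue into a $C^1_H$ curve.

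On a gap of length $\ell_i := b_i-a_i$, left-translating by $\gamma(a_i)^{-1}$ and dilating by $\delta_{1/\ell_i}$ reduces the task to producing $\sigma_i \in C^1_H([0,1],\bbG)$ from $(0,X(a_i))$ to $(\hat z_i, X(b_i))$ with $\|\sigma_i'-X(a_i)\|_\infty$ small, where $\hat z_i := \delta_{1/\ell_i}(\gamma(a_i)^{-1}\gamma(b_i))$: setting $\Gamma(t) := \gamma(a_i)\,\delta_{\ell_i}(\sigma_i(\tfrac{t-a_i}{\ell_i}))$ gives the correct endpoint values and derivatives, and $\|\Gamma'-X(a_i)\|_{L^\infty(a_i,b_i)} = \|\sigma_i'-X(a_i)\|_\infty$, using that the $\mathfrak{g}_1$-derivative transforms as $(\gamma(a_i)\delta_{\ell_i}(\sigma(\cdot/\ell_i)))' = \sigma'(\cdot/\ell_i)$ and that $\delta_{1/\ell_i}(\exp(\ell_i X(a_i))) = \exp(X(a_i))$. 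Crucially, after this rescaling the Whitney hypothesis becomes $d(\hat z_i,\exp(X(a_i))) \le r_{K,\ell_i} \to 0$, and uniform continuity of $X$ on $K$ makes $\|X(b_i)-X(a_i)\|$ small, both as $\ell_i \to 0$. So for $\varepsilon_k := 1/k$ take $\eta_k$ from Proposition~\ref{p-unifpliable} (for $\omega$ and $\varepsilon_k$) and choose $\delta_k \downarrow 0$ with $\ell_i < \delta_k \Rightarrow \|X(b_i)-X(a_i)\|<\eta_k$ and $d(\hat z_i,\exp(X(a_i)))<\eta_k$; Proposition~\ref{p-unifpliable}, applied with $W=X(a_i)$, $Z=X(b_i)$, $z=\hat z_i$, then yields $\sigma_i$ with $\|\sigma_i'-X(a_i)\|_\infty < 1/k$ whenever $\delta_{k+1}\le \ell_i < \delta_k$. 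Only finitely many gaps have $\ell_i \ge \delta_1$ (the total length of the gaps is finite), and for each of those I would invoke the standard fact (as in \cite{JS16}) that any two points of $\bbG$, each tagged with an arbitrary horizontal direction, are joined by some $C^1_H$ curve realizing the prescribed endpoint $1$-jets.

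It then remains to verify $\Gamma \in C^1_H(\bbR,\bbG)$, $\Gamma|_K=\gamma$, $\Gamma'|_K=X$. Continuity of $\Gamma$ follows from the endpoint matching. The $\mathfrak{g}_1$-derivative $\Gamma'$ is continuous on each gap and on each ray, equals the continuous map $X$ on $K$, and has matching one-sided limits at the shared endpoints; at a point $t_0 \in K$ that is an accumulation point of gaps, the bound $\|\Gamma'-X(a_i)\|_{L^\infty(a_i,b_i)} < 1/k \to 0$ on the small gaps near $t_0$, together with continuity of $X$, forces $\Gamma'(t)\to X(t_0)$. For Euclidean differentiability of $\Gamma$ at such a $t_0$ I would split $t\to t_0$: for $t\in K$ this is exactly Lemma~\ref{l-toostrong2} applied with the Whitney hypothesis, which converts it to $|\gamma(t)-\gamma(t_0)-(t-t_0)X(t_0)(\gamma(t_0))| = o(|t-t_0|)$; for $t$ in a small gap $(a_i,b_i)$ one writes $\Gamma(t)-\Gamma(t_0) = (\Gamma(t)-\gamma(a_i)) + (\gamma(a_i)-\gamma(t_0))$, estimating the first summand by integrating $\dot\Gamma$ along the gap (where it stays $O(1/k)$-close to the smooth vector field $X(a_i)(\cdot)$) and the second again by Lemma~\ref{l-toostrong2}. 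This gives $\dot\Gamma(t_0) = X(t_0)(\gamma(t_0))$, so $\Gamma$ is $C^1$ as a curve in $\bbR^n$ and horizontal, i.e.\ $C^1_H$, which is what we want.

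The step I expect to be the main obstacle is this last assembly: reconciling, uniformly over all the shrinking gaps, the Euclidean-coordinate control coming from Lemma~\ref{l-toostrong2} with the intrinsic ($\mathfrak{g}_1$-derivative) control coming from Proposition~\ref{p-unifpliable}, so that both differentiability of $\Gamma$ and continuity of $\Gamma'$ hold at every accumulation point of $K$. By contrast, the reductions, the single-gap dilation bookkeeping, and the handling of the finitely many "large" gaps are routine once Proposition~\ref{p-unifpliable} is available.
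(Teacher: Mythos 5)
Your proposal is correct and follows essentially the same route as the paper's proof: compactness of $X(K)$ plus Proposition~\ref{p-unifpliable} to fill the small gaps via left translation and dilation with the derivative bound $\|\Gamma'-X(a_i)\|_\infty<1/k$, arbitrary $C^1_H$ interpolation on the finitely many large gaps, and Lemma~\ref{l-toostrong2} combined with the gap bounds to get Euclidean differentiability on $K$ and continuity of $\dot\Gamma$. The step you flag as the main obstacle is handled in the paper exactly as you sketch it (a three-term splitting at accumulation points of gaps), so no genuinely new idea is needed beyond what you wrote.
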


\begin{proof}
Suppose $K \subset \mathbb{R}$ is compact and that $\gamma:K \to \bbG$ and $X:K \to \mathfrak{O}$ are continuous maps such that
\[ r_{K,\eta}:= \sup_{\substack{\tau, t\in K\\0<|\tau-t|<\eta}} \frac{ d(\gamma(t), \gamma(\tau)\exp((t-\tau)X(\tau)))}{|\tau-t|}\]
satisfies $r_{K,\eta}\to 0$ as $\eta \downarrow 0$.

    The set $\omega = X(K)$ is compact in $\mathfrak{O}$.
    Thus,
    for any $n \in  \mathbb{N}$, we may use Proposition~\ref{p-unifpliable} to choose some $0< \eta_n < \tfrac{1}{n}$ such that, for all $W \in \omega$ and all $Z \in B_{\mathfrak{g}_1}(W,\eta_n)$ and $z \in B_{\bbG}(\exp(W),\eta_n)$, there exists a curve $\gamma \in C^1_H([0,1],\bbG)$ from $(0,W)$ to $(z,Z)$ 
    such that $\Vert \gamma' - W \Vert_{\infty} < \frac{1}{n}$.
    We may then apply the uniform continuity of $X$ on $K$ to choose some $0<\alpha_n < \tfrac{1}{n}$ so that, whenever $x,y \in K$ satisfy $|x-y|<\alpha_n$, we have
    $
    \|X(x) - X(y)\| < \eta_n
    $
    and $r_{K,\alpha_n} < \eta_n$. 


    Write $[\min K , \max K] \setminus K$ as the union of disjoint open intervals $(a_i,b_i)$ for $i \in \mathbb{N}$.     
    Fix some $i \in \mathbb{N}$ and write $(a,b) := (a_i,b_i)$. 
    Suppose for the moment that $b-a < \alpha_1$.
    Choose the largest integer $n \in \mathbb{N}$ so that $|b-a| < \alpha_n$. Write $z= \delta_{1/(b-a)}(\gamma(a)^{-1} \gamma(b))$.
    Then
    \begin{equation}
        \label{e-vertvexctor}
    \|X(b)-X(a)\| < \eta_n
    \end{equation}
    and 
    \begin{align*}
        d(z,\exp(X(a))) = \frac{d(\gamma(b),\gamma(a) \exp{((b-a)X(a))}}{b-a} \leq r_{K,\alpha_n}
        < \eta_n.
    \end{align*}
    It follows that there is some curve $\gamma \in C^1_H([0,1],\bbG)$ from $(0,X(a))$ to $(z,X(b))$ such that
    $\Vert \gamma' - X(a) \Vert_{\infty} < \frac{1}{n}$.
    Then, for all $t \in [a,b]$,
    define $\Gamma(t) := \gamma(a)\delta_{(b-a)}\left(\gamma(\frac{t-a}{b-a})\right)$.
    Hence $\Gamma|_{[a,b]}$ is a curve from $(\gamma(a),X(a))$ to $(\gamma(b),X(b))$, 
    and
    \begin{equation}
        \label{e-boundonderiv}
        \Vert \Gamma' - X(a) \Vert_{\infty} < \tfrac{1}{n}
    \end{equation}
    by (T1) on page 1646 of \cite{JS16}. 
    
    If $b-a \geq \alpha_1$, choose $\gamma$ to be any curve in $C^1_H([0,1],\mathbb{G})$ from $(0,X(a))$ to $(z,X(b))$ and define $\Gamma$ as above. Since $K$ is bounded, there are only finitely many such intervals and they will not play an important role when considering convergences at small scales.
    
    Applying this procedure to $(a_{i},b_{i})$ for each $i \in \mathbb{N}$ defines a map $\Gamma:[\min K , \max K] \setminus K \to \mathbb{G}$. We extend the definition of $\Gamma$ to all of $[\min K , \max K]$ by defining $\Gamma|_K := \gamma|_K$.

    We know by construction that $\Gamma$ is $C^1$ and horizontal on $(a_i,b_i)$ for each $i\geq 1$. It remains to prove that $\Gamma$ is differentiable at any $x \in K$ with $\Gamma'(x) = X(x)$ and that $\Gamma'$ is continuous on $[\min K, \max K]$. For the remainder of the proof, consider $\Gamma$ as a curve in the ambient space $\mathbb{R}^n$ using exponential coordinates of the first kind. In particular, we consider $X(\cdot)$ to be a left invariant vector field on $\mathbb{R}^n$. 

    Fix a point $x\in K$.
    We will first show that the Euclidean derivative $\dot{\Gamma}$ of $\Gamma$ at $x$ exists and is equal to $X(x)(\Gamma(x))$. To simplify (while abusing) notation, we will still write $X(t)$ to represent $X(t)(\Gamma(t))$ for any $t \in K$. Suppose $\{x_\ell\}_{\ell=1}^{\infty}$ is a sequence in $\mathbb{R}$ converging to $x$. We must prove that 
    \begin{equation}
    \label{e-converges}        
    \frac{| \Gamma(x_\ell) - \Gamma(x) - (x_\ell-x) X(x)|}{x_\ell - x} \to 0
    \end{equation}
    as $\ell \to \infty$.
    Assume that $\{x_\ell\}$ is decreasing. (The proof in the case when $\{ x_\ell \}$ is increasing is similar.) 
    If all $x_\ell$ lie in one interval $(a_k,b_k)$, then $x = a_k$, so \eqref{e-converges} follows from the fact that $\Gamma$ is $C^1$ on $(a_k,b_k)$ and $\Gamma$ is differentiable from the right at $a_{k}$ with derivative equal to $X(a_{k})(\Gamma(a_{k}))$.
    If all $x_\ell$ lie in $K$, then \eqref{e-converges} follows from Lemma~\ref{l-toostrong2}.
    It suffices (by passing to subsequences if necessary) to consider the case when each $x_\ell$ lies in a different interval $(a_{k_\ell},b_{k_\ell})$ for each $\ell$. 
    We then have 
    \begin{align}
        | \Gamma(x_\ell) - \Gamma(x) - (x_\ell-x) X(x)| &\leq 
        | \Gamma(x_\ell) - \Gamma(a_{k_\ell}) - (x_\ell-a_{k_\ell}) X(a_{k_\ell}) | \label{e-smooth1}\\
        & \qquad + | \gamma(a_{k_\ell}) - \gamma(x) - (a_{k_\ell}-x)X(x) | \label{e-smooth2}\\
        & \qquad + (x_\ell-a_{k_\ell})| X(a_{k_\ell}) - X(x)|. \label{e-smooth3}
    \end{align}
    Since $\Gamma$ is $C^1$ on $(a_{k_\ell},b_{k_\ell})$ and since $\Gamma$ is differentiable from the right at $a_{k_\ell}$ with derivative equal to $X(a_{k_\ell})(\Gamma(a_{k_\ell}))$, we may bound the term on the right-hand side of \eqref{e-smooth1} by
    $$
    \int_{a_{k_\ell}}^{x_\ell} |\dot{\Gamma}(t) - \dot{\Gamma}(a_{k_\ell})| \, dt
    \leq
    \sup_{t \in [a_{k_\ell},b_{k_\ell}]} |\dot{\Gamma}(t) - \dot{\Gamma}(a_{k_l})|(x_\ell - a_{k_\ell}).
    $$Since $x_\ell - a_{k_\ell} \leq x_\ell -x$, this quantity is $o(x_\ell-x)$ due to \eqref{e-boundonderiv}.
    The term in \eqref{e-smooth3} is $o(x_\ell-x)$ since $X$ is continuous on $K$.
    The fact that \eqref{e-smooth2} is $o(x_\ell-x)$ follows from Lemma~\ref{l-toostrong2}.
    Therefore, $\dot{\Gamma}(x)$ exists and is equal to $X(x)(\Gamma(x))$ for all $x \in K$.
    
    Finally, we will show that $\dot{\Gamma}$ is continuous on $[\min K, \max K]$ i.e.
        \begin{equation}
        \label{e-ctsderiv}
        |\dot{\Gamma}(x_\ell) - \dot{\Gamma}(x)| \to 0
    \end{equation}
    for any $x\in K$ and any sequence $x_\ell \to x$. Suppose $x\in K$, since otherwise the statement is obvious.
    As above, we may assume that $\{x_\ell\}$ is decreasing.
    If all $x_\ell$ lie in one interval $(a_k,b_k)$, then \eqref{e-ctsderiv} follow from the fact that $\Gamma$ is $C^1$ on $(a_k,b_k)$ and $\dot{\Gamma}(a_k) = X(a_k)(\Gamma(a_k))$.
    If all $x_\ell$ lie in $K$, then \eqref{e-ctsderiv} follows from our assumption that $X$ is continuous on $K$ and $\Gamma' = X$ on $K$.
    In the case when each $x_\ell$ lies in a different interval $(a_{k_\ell},b_{k_\ell})$ for each $\ell$,
    we have    
    \begin{align*}
    |\dot{\Gamma}(x_\ell) - \dot{\Gamma}(x)|
    &\leq
    |\dot{\Gamma}(x_\ell) - X(a_{k_\ell})(\Gamma(a_{k_\ell}))|
    +
    |X(a_{k_\ell})(\Gamma(a_{k_\ell})) - X(x)(\Gamma(x)|.
    \end{align*}
    The first term on the right vanishes as $\ell \to \infty$ 
    due to \eqref{e-boundonderiv}, and the second term vanishes
    by our assumption that $X$ is continuous on $K$.

    In summary, we have shown that $\dot{\Gamma}(x)$ exists, is equal to $X(x)$, and is continuous for all $x \in K$. It follows from the construction of $\Gamma$ and definition of $X$ that $\Gamma$ is $C^1$ on $[\min K, \max K]$ and $\Gamma'(t)\in \mathfrak{g}_{1}$ for every $t\in [\min K, \max K]$. Also $\gamma$ extends $\Gamma$ and $\gamma'$ extends $X$ to $[\min K, \max K]$. Finally we extend $\Gamma$ arbitrarily off $[\min K, \max K]$ to a $C_{H}^{1}$ curve on $\mathbb{R}$.
\end{proof}

\section{Pliability and Lusin approximation in the Engel Group}
\label{sec-engel}

Recall the Engel group has Lie algebra $V_{1}\oplus V_{2} \oplus V_3$, where the horizontal layer $V_{1}$ is spanned by $X_{1}$ and $X_{2}$ (Definition \ref{Engeldef}). We identify it with $\mathbb{R}^{4}$ in exponential coordinates of the second kind. We first classify those directions that are pliable.

\begin{theorem}
\label{t-engel}
Given $a,b\in \mathbb{R}$, let $V=aX_{1}+bX_{2}$ be an element of the horizontal layer $V_{1}$ of the Engel group. Then $V$ is pliable if and only if either $a\neq 0$ or $a=b=0$. 
\end{theorem}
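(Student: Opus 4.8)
The plan is to split into two directions. First, I would show that if $a = b = 0$ then $V = 0$ is pliable — this should be easy or already essentially known, since the constant curve $t \mapsto 0$ has a large family of $C^1_H$ perturbations with prescribed endpoint and endpoint derivative; one can use short "bang-bang"-type corrections near $t=1$, exactly as in the pliability arguments of \cite{JS16}, to hit any nearby $(z, Z) \in \bbG \times V_1$. More interesting is the case $a \neq 0$. Here I would like to reduce to a one-parameter reachability computation: by left-invariance and the fact that dilations rescale things nicely, and by the structure of the Engel vector fields in \eqref{engelhorizontalvectors}, I would consider horizontal curves $\beta$ with $\beta(0) = 0$, $\beta'(0) = V$, $\beta'(1) = V$, and $\beta'$ uniformly close to $V$, and compute the set of possible endpoints $\beta(1)$. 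Writing $\beta = (\beta_1, \beta_2, \beta_3, \beta_4)$, horizontality (Lemma~\ref{Engellift}) forces $\dot\beta_3 = \beta_1 \dot\beta_2$ and $\dot\beta_4 = \tfrac12 \beta_1^2 \dot\beta_2$, so the endpoint is determined by the two free controls $\dot\beta_1, \dot\beta_2$. The key point is that when $a \neq 0$, the coordinate $\beta_1$ sweeps through an interval of positive length near $a$, so the "weights" $\beta_1$ and $\tfrac12\beta_1^2$ appearing in the integrals for $\beta_3$ and $\beta_4$ are genuinely independent as functionals of $\dot\beta_2$ — this is what lets us independently adjust $\beta_3(1)$ and $\beta_4(1)$.

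Concretely, the key steps in order: (1) Normalize using left-invariance so the base curve starts at the identity, and reduce pliability of $V$ to a local surjectivity statement for the endpoint map on small $C^1_H$-perturbations fixing the initial 1-jet; also reduce, via the endpoint-derivative, to controlling only the endpoint $\beta(1) \in \bbR^4$ since matching $\beta'(1) = Z$ near $V$ can be arranged by a final small adjustment (as in Proposition~\ref{p-unifpliable}'s construction). (2) Parametrize perturbations by $\dot\beta_1 = a + f$, $\dot\beta_2 = b + g$ with $f, g$ small and supported in $[0,1]$ with appropriate boundary conditions ensuring the $C^1_H$ matching of the jets at $t=0,1$. (3) Compute $\beta_3(1) = \int_0^1 \beta_1 \dot\beta_2$ and $\beta_4(1) = \tfrac12\int_0^1 \beta_1^2 \dot\beta_2$ and show the map $(f,g) \mapsto (\beta_1(1), \beta_2(1), \beta_3(1), \beta_4(1))$ is open near $(0,0)$. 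For the last step I would compute a formal "derivative" of the endpoint map and exhibit perturbations moving each coordinate independently: varying $g$ alone moves $\beta_2(1)$; then with $\beta_1 \approx a \neq 0$ fixed, varying $g$ with different sign patterns moves $\beta_3(1)$; and crucially, using the freedom to also perturb $f$ so that $\beta_1$ takes two distinct values on two subintervals, the pair $(\int \beta_1 \dot\beta_2, \int \beta_1^2 \dot\beta_2)$ can be moved in two independent directions — a small Vandermonde-type $2\times 2$ determinant in the two values of $\beta_1$, nonzero precisely because we can make those values distinct, which needs $a \neq 0$ only to stay away from the degenerate locus where $X_2$'s coefficients collapse. (4) For the converse, when $a = 0$ and $b \neq 0$, show $V = bX_2$ is not pliable: along $t \mapsto \exp(tbX_2)$ we have $\beta_1 \equiv 0$ if we keep $\dot\beta_1$ small... actually $\beta_1$ can be made small but nonzero, so the obstruction is subtler — here $\dot\beta_4 = \tfrac12\beta_1^2\dot\beta_2 \geq 0$ when $\dot\beta_2 > 0$, or more precisely the quadratic dependence on $\beta_1$ forces $\beta_4(1)$ to satisfy a one-sided inequality (something like $\beta_4(1) \gtrsim \beta_3(1)^2$ after normalization) that prevents the endpoint set from being a full neighborhood; I would make this precise by a Cauchy–Schwarz / convexity estimate showing $\beta_4(1)$ cannot be pushed below a value determined by $\beta_3(1)$ and $\beta_1(1)$.

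The main obstacle I expect is step (4), the non-pliability of $X_2$: unlike the positive direction, which is a soft open-mapping argument, proving non-pliability requires an honest quantitative inequality constraining the reachable set, and one must be careful that the $C^1_H$-closeness allows $\beta_1$ to be small but nonzero, so the constraint is not simply "$\beta_4$ has a fixed sign" but rather a scaling-type inequality relating $\beta_4(1)$ to lower-order data — and one has to check this inequality is genuinely violated by points in every neighborhood of $(\exp(V), V)$. The positive direction's openness argument is more routine but still needs care in step (3) to verify the relevant $2\times 2$ Jacobian is nondegenerate and to upgrade the formal computation to an actual open-mapping statement (e.g. via a quantitative implicit function theorem or the explicit bang-bang constructions of \cite{JS16}).
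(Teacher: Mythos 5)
Your overall strategy coincides with the paper's: for $a\neq 0$ the paper also reduces, via \cite[Proposition 3.7]{JS16}, to showing that the endpoints $\beta(1)$ of competitors with only the initial jet $(\beta,\beta')(0)=(0,V)$ prescribed fill a neighborhood of $\exp(V)$, and it also realizes this by a finite-dimensional family of controls plus the inverse function theorem — concretely $\dot\gamma_1=a+ct$, $\dot\gamma_2=b+d_1t+d_2t^2+d_3t^3$, with the $4\times 4$ Jacobian of the endpoint map having determinant $\tfrac12 a^3\cdot\tfrac1{86400}$, a moment-matrix nondegeneracy that plays exactly the role of your Vandermonde determinant and makes explicit why $a\neq 0$ is needed. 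Your bump-function parametrization is a cosmetic variant of this; the points you should not gloss over are (i) the verification that the whole family lies in a prescribed $C^1_H$-ball around $t\mapsto\exp(tV)$ (including closeness in the CC distance, which the paper handles via uniform continuity of the identity map from the Euclidean to the CC topology on a compact set), and (ii) the reduction that discards the condition on $\beta'(1)$: your proposed ``final small adjustment as in Proposition~\ref{p-unifpliable}'' is not a one-line fix (appending a derivative-interpolating arc moves the endpoint, so you must re-target it), and the clean route is simply to invoke \cite[Proposition 3.7]{JS16} as the paper does. The case $a=b=0$ is likewise handled by citation (\cite[Proposition 6.1]{JS16}).

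The one place where you are off target is the step you flag as the main obstacle, the non-pliability of $bX_2$ with $b\neq 0$: it is much easier than you fear, and the simple sign observation you half-dismiss is the entire argument. If $\beta$ is a competitor with $\Vert\beta'-bX_2\Vert_\infty<|b|/2$, then $\dot\beta_2$ has the fixed sign of $b$ on all of $[0,1]$, and by Lemma~\ref{Engellift} $\dot\beta_4=\tfrac12\beta_1^2\dot\beta_2$, so $b\,\beta_4(1)\geq 0$ regardless of how $\beta_1$ behaves (it may well be nonzero — $\beta_1^2\geq 0$ is all that is used). Hence every reachable endpoint lies in the closed half-space $\{b\,p_4\geq 0\}$, while $\exp(bX_2)=(0,b,0,0)$ sits on its boundary, so the endpoint set is not a neighborhood of $\exp(bX_2)$ and $bX_2$ is not pliable. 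No Cauchy--Schwarz or convexity inequality of the form $\beta_4(1)\gtrsim\beta_3(1)^2$ is needed, and nothing about the opposite direction (covering the half-space $\{b\,p_4>0\}$) has to be analyzed. With that correction, and with the two citations above in place of your improvised reductions, your outline matches the paper's proof.
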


\begin{proof}
\emph{Suppose  $a=b=0$.} In this case, pliability follows because the zero vector is pliable in any Carnot group \cite[Proposition 6.1]{JS16}. 

\noindent \textbf{Case 1.} Suppose $a,b\in \mathbb{R}$ with $a\neq 0$. 

 Given $c, d_{1}, d_{2}, d_{3} \in \mathbb{R}$, consider $\gamma=\gamma_{c,d_1,d_2,d_3} \colon [0,1]\to \mathbb{R}^{4}$ defined by $\gamma(0)=0$ and, for all $t\in [0,1]$,
\[ \dot{\gamma}_{1}(t)=a+ct, \qquad \dot{\gamma}_{2}(t)=b+d_{1}t+d_{2}t^{2}+d_{3}t^{3}\]
and
\[ \dot{\gamma}_{3}(t)=\dot{\gamma}_{2}(t)\gamma_{1}(t), \qquad \dot{\gamma}_{4}(t)=\frac{1}{2}\dot{\gamma}_{2}(t)(\gamma_{1}(t))^{2}.\]
By Lemma \ref{Engellift}, $\gamma$ is horizontal. Clearly $\gamma \in C_{H}^{1}([0,1],\mathbb{E})$ and \[\dot{\gamma}(0)=(a,b,0,0)=aX_{1}(0)+bX_{2}(0)=V(0).\]

We next compute $\gamma(1)$. Later in the proof, we will apply the inverse function theorem during which terms that are polynomial in $c, d_{1}, d_{2}, d_{3}$ of degree two or higher will be irrelevant. For that reason, we denote such terms by $R$. Note that the exact formula for $R$ may be different in each line.

To begin, clearly by integration 
\[\gamma_{1}(1)=a+\frac{c}{2} \quad \mbox{and} \quad \gamma_{2}(1)=b+\frac{d_{1}}{2}+\frac{d_{2}}{3}+\frac{d_{3}}{4}.\]
Since also $\gamma_{1}(t)=at+\frac{c}{2}t^2$, we have
\begin{align*}
\gamma_{3}(1)=\int_{0}^{1} \dot{\gamma}_{2}\gamma_{1}&= \int_{0}^{1} (b+d_{1}t+d_{2}t^{2}+d_{3}t^{3})(at+\frac{c}{2}t^2) dt \\
&= \int_{0}^{1} ab t + \frac{bc}{2}t^{2}+ad_{1}t^{2}+ad_{2}t^{3}+ad_{3}t^{4} dt + R\\
&=\frac{ab}{2}+\frac{bc}{6}+\frac{ad_{1}}{3}+\frac{ad_{2}}{4}+\frac{ad_{3}}{5}+R.
\end{align*}
Similarly,
\begin{align*}
\gamma_{4}(1)=\frac{1}{2}\int_{0}^{1} \dot{\gamma}_{2}(\gamma_{1})^{2}&=\frac{1}{2} \int_{0}^{1} (b+d_{1}t+d_{2}t^{2}+d_{3}t^{3})(at+\frac{c}{2}t^2)^{2} dt \\
&=\frac{1}{2}\int_{0}^{1}(b+d_{1}t+d_{2}t^{2}+d_{3}t^{3})(a^{2}t^{2}+act^{3}) dt + R\\
&=\frac{1}{2}\int_{0}^{1} a^{2}bt^{2}+abct^{3}+a^{2}d_{1}t^{3}+a^{2}d_{2}t^{4}+a^{2}d_{3}t^{5} dt + R\\
&=\frac{a^{2}b}{6}+\frac{abc}{8}+\frac{a^{2}d_{1}}{8}+\frac{a^{2}d_{2}}{10}+\frac{a^{2}d_{3}}{12}+R.
\end{align*}
Now consider the map $F\colon \mathbb{R}^{4}\to \mathbb{R}^{4}$ defined by $F(c,d_{1},d_{2},d_{3})=(\gamma_{1}(1),\gamma_{2}(1),\gamma_{3}(1),\gamma_{4}(1))$. Notice $F(0,0,0,0)=(a,b,\frac{ab}{2},\frac{a^2 b}{6})=\exp(V)$.
The derivative of $F$ at $(0,0,0,0)$ is given by
\[dF(0,0,0,0)=\begin{pmatrix}
\frac{1}{2} & 0 & 0 & 0\\
0 & \frac{1}{2} & \frac{1}{3} & \frac{1}{4}\\
\frac{b}{6} & \frac{a}{3} & \frac{a}{4} & \frac{a}{5}\\
\frac{ab}{8} & \frac{a^2}{8} & \frac{a^2}{10} & \frac{a^2}{12}
\end{pmatrix}. \]
As noted before, the terms contained in the polynomials denoted by $R$ do not contribute to the expression for $dF(0,0,0,0)$. The determinant of $dF(0,0,0,0)$ is equal to
\[ \frac{1}{2}a^{3} \begin{vmatrix}
\frac{1}{2} & \frac{1}{3} & \frac{1}{4}\\
\frac{1}{3} & \frac{1}{4} & \frac{1}{5}\\
\frac{1}{8} & \frac{1}{10} & \frac{1}{12}
\end{vmatrix} = \frac{1}{2}a^{3} \cdot \frac{1}{86400}\neq 0 \]
since we assumed $a \neq 0$.
Hence, by the inverse function theorem, there exists an open set $U_1 \subset \mathbb{R}^{4}$ containing $(0,0,0,0)$ and an open set $U_2 \subset \mathbb{R}^{4}$ containing $F(0,0,0,0)=\exp(V)$ such that $F|_{U_1}$ is a $C^{1}$ bijection from $U_1$ onto $U_2$ with $C^{1}$ inverse. It follows that whenever $r>0$ satisfies $B(0,r)\subset U_1$, then $F(B(0,r))$ is an open set containing $F(0)$.

To show $V$ is pliable it suffices to show that for every neighborhood $\mathcal{V}$ of the curve $t\mapsto L(t):= \exp(tV)$ in $C_{H}^{1}([0,1],\mathbb{E})$, the set
\[\{\beta(1):\beta \in \mathcal{V}, (\beta, \beta')(0)=(0,V)\}\]
is a neighborhood of $\exp(V)$ \cite[Proposition 3.7]{JS16}. Fix such a neighborhood $\mathcal{V}$. 

\medskip

\noindent \textbf{Claim:} If $r>0$ is sufficiently small then $\gamma = \gamma_{c,d_1,d_2,d_3}\in \mathcal{V}$ for all $(c,d_1,d_2,d_3) \in B(0,r)$.

\medskip

We will first show how the claim can be used to prove the theorem.
Suppose we can choose $r>0$ small enough such that $\gamma = \gamma_{c,d_1,d_2,d_3}\in \mathcal{V}$ for all $(c,d_1,d_2,d_3) \in B(0,r)$ and $B(0,r)\subset U_1$. Then 
\[\{\beta(1):\beta \in \mathcal{V}, (\beta, \beta')(0)=(0,V)\}\] contains \[F(B(0,r))=\{\gamma_{c,d_1,d_2,d_3}(1):(c,d_1,d_2,d_3) \in B(0,r)\}.\] This is an open set containing $\exp(V)$, so it follows that $V$ is pliable.

\medskip

\noindent \emph{Proof of Claim:}
Notice $L'(t)=V=aX_{1}+bX_{2}$ and $\gamma'(t)=(a+ct)X_{1}+(b+d_{1}t+d_{2}t^{2}+d_{3}t^{3})X_{2}$. Hence
\[\|\gamma'(t)-L'(t)\|=\sqrt{c^{2}t^{2}+(d_{1}t+d_{2}t^{2}+d_{3}t^{3})^{2}}.\]
After expanding the square, we see given any $\varepsilon>0$ there is $\delta>0$ such that $0<r<\delta$ implies $\sup_{t\in [0,1]} \| \gamma'(t)-L'(t)\|\leq \varepsilon$ for all $(c,d_1,d_2,d_3) \in B(0,r)$.

It remains to show that, for any $\varepsilon>0$, there is some $\delta>0$ such that $0<r<\delta$ implies \[\sup_{t\in [0,1]}d(\gamma(t),L(t))\leq \varepsilon \qquad \mbox{for all }(c,d_1,d_2,d_3) \in B(0,r).\] Recall that the Euclidean distance $d_{e}$ and the CC distance $d_{cc}$ on a Carnot group (when identified in coordinates with some Euclidean space via a diffeomorphism) generate the same topology. Hence the identity map from $(\mathbb{R}^{4},d_{e})$ to $(\mathbb{R}^{4},d_{cc})$ is continuous, where $d_{cc}$ is the CC distance on the Engel group. In particular, it is uniformly continuous on compact sets. 

According to the definition of $\gamma_{c,d_{1},d_{2},d_{3}}$, we can choose a compact set $K\subset \mathbb{R}^{4}$ that contains the image of $\gamma=\gamma_{c,d_{1},d_{2},d_{3}}$ for all  $(c,d_1,d_2,d_3) \in B(0,1)$. Using the uniform continuity described above, given $\varepsilon>0$ there exists $\eta>0$ such that for any $t\in [0,1]$ and $(c,d_1,d_2,d_3) \in B(0,1)$ the following implication holds: $|\gamma(t)-L(t)|<\eta$ implies $d(\gamma(t),L(t))<\varepsilon$. 
Thus it suffices to show there is $\delta>0$ such that $\sup_{t\in [0,1]}|\gamma(t)-L(t)|<\eta$ for all $0<r<\delta$. Since $\gamma(0)=L(0)=0$ we have 
\[ |\gamma(t)-L(t)| = \left| \int_{0}^{t}(\dot{\gamma}(s)-\dot{L}(s)) ds \right| \leq \sup_{s\in [0,1]}   |\dot{\gamma}(s)-\dot{L}(s)|\]
where $\dot{\gamma}=(\dot{\gamma}_{1},\dot{\gamma}_{2},\dot{\gamma}_{3},\dot{\gamma}_{4})$ and $\dot{L}=(\dot{L}_{1},\dot{L}_{2},\dot{L}_{3},\dot{L}_{4})$ are the Euclidean derivatives of the coordinate representations of the curves in $\mathbb{R}^{4}$ as in the previous section. The required fact then follows using the definition of $\dot{L}$ and $\dot{\gamma}$. Indeed, each term $|\dot{\gamma}_{i}(s)-\dot{L}_{i}(s)|$ for $1\leq i\leq 4$ will be a polynomial in $s$ whose coefficients are products of constants and at least one of $c, d_{1}, d_{2}, d_{3}$.

Combining the previous steps and the definition of the distance on $C_{H}^{1}([0,1],\mathbb{E})$, it follows that, $\gamma \in \mathcal{V}$ for all $(c,d_1,d_2,d_3) \in B(0,r)$ when $r>0$ is sufficiently small. This proves the claim and thus the theorem in the case when $a \neq 0$.

\medskip

\noindent \textbf{Case 2.} Suppose $a=0$ and $b\neq 0$.

In this case $V = bX_{2}$ for some $b\neq 0$. Then $\exp(bX_{2})=(0,b,0,0)$. Suppose $\gamma \in C^{1}_H([0,1],\mathbb{E})$ with $\gamma(0)=0$ such that $|(\dot{\gamma}_{1}(t),\dot{\gamma}_{2}(t))-(0,b)|<\frac{|b|}{2}$ for $t\in [0,1]$. Then the sign of $\dot{\gamma}_{2}$ will always be the same as the sign of $b$. Hence the sign of $\gamma_{4}(1)=\frac{1}{2}\int_{0}^{1}\dot{\gamma}_{2}(t)(\gamma_{1}(t))^{2}dt$ is the same as the sign of $b$. Therefore, if $\mathcal{V}$ is an open ball in $C_{H}^{1}([0,1],\mathbb{E})$ with center $t\mapsto \exp(tbX_{2})$ and radius $|b|/2$, the set
\[\{\beta(1):\beta \in \mathcal{V}, (\beta, \beta')(0)=(0,V)\}\]
cannot contain any points $(p_{1},p_{2},p_{3},p_{4})$ where the sign of $p_{4}$ is different from the sign of $b$. In particular, it cannot be a neighborhood of $\exp(bX_{2})=(0,b,0,0)$. Hence $bX_{2}$ is not pliable.
\end{proof}

Theorem \ref{t-engel} implies that every vector in the set $\mathfrak{O} = V_1 \setminus \text{span}\{X_2\}$ is pliable.
Combining this fact with Theorem~\ref{t-partialwhitney} gives the following.

\begin{corollary}\label{EngelWhitney}
    The Engel group has the $C^1_H$ Whitney extension property on $V_1 \setminus \mathrm{span} \{X_2\}$.
\end{corollary}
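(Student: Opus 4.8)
The plan is to combine Theorem~\ref{t-engel} and Theorem~\ref{t-partialwhitney}, so the argument is essentially a formal deduction. First I would set $\mathfrak{O} := V_1 \setminus \mathrm{span}\{X_2\}$ and observe that, writing a generic horizontal vector as $V = aX_1 + bX_2$, membership $V \in \mathfrak{O}$ is equivalent to $a \neq 0$. Theorem~\ref{t-engel} asserts that $V = aX_1 + bX_2$ is pliable precisely when $a \neq 0$ or $a = b = 0$; in particular every vector with $a \neq 0$ is pliable, so every vector in $\mathfrak{O}$ is pliable.

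With this in hand, I would apply Theorem~\ref{t-partialwhitney} to the Carnot group $\bbG = \mathbb{E}$ with horizontal layer $V_1$ and the subset $\mathfrak{O}$. Since every vector in $\mathfrak{O}$ is pliable, Theorem~\ref{t-partialwhitney} yields immediately that $\mathbb{E}$ has the $C^1_H$ Whitney extension property on $\mathfrak{O} = V_1 \setminus \mathrm{span}\{X_2\}$, which is exactly the assertion of the corollary.

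There is no real obstacle to overcome here: the only content beyond citing the two theorems is the elementary set-theoretic identification of $V_1 \setminus \mathrm{span}\{X_2\}$ with the nonzero pliable directions classified in Theorem~\ref{t-engel}. (Note that $\mathrm{span}\{X_2\}$ contains the origin, which is itself pliable by \cite[Proposition~6.1]{JS16}, but this vector is not needed for the statement since it is excluded from $\mathfrak{O}$.) All of the substantive work has already been carried out: the directional tracking in the proof of Theorem~\ref{t-partialwhitney} and the inverse function theorem computation in Case~1 of Theorem~\ref{t-engel}.
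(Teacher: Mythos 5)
Your proposal is correct and matches the paper's argument exactly: the paper deduces the corollary by noting that Theorem~\ref{t-engel} makes every vector of $V_1 \setminus \mathrm{span}\{X_2\}$ (i.e.\ those with $a \neq 0$) pliable and then applying Theorem~\ref{t-partialwhitney} with $\mathfrak{O} = V_1 \setminus \mathrm{span}\{X_2\}$. Your side remark about the zero vector is accurate but, as you say, not needed.
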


Combining Corollary \ref{EngelWhitney} with ideas from \cite{JS16}, we prove the following Lusin approximation result in the Engel group. As we were unable to fill in the details of the related proof from \cite{JS16}, we provide an original proof below.

\begin{theorem}\label{engellusin}
Let $\gamma\colon [a,b]\to \mathbb{R}^{4}$ be a horizonal curve in the Engel group. Suppose
\[S:=\{t\in [a,b]:\gamma'(t)\notin \mathrm{Span}(X_{2}) \}\]
satisfies $m(S)>0$. 
Then for any $\varepsilon>0$ there exists a $C^{1}$ horizontal curve $\Gamma\colon [a,b]\to \mathbb{R}^{4}$ such that
\[m\{t\in [a,b]:\Gamma(t)=\gamma(t)\}>m(S)-\varepsilon.\]
\end{theorem}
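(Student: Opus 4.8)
The plan is to run the standard passage from a Whitney-type extension theorem to a Lusin-type approximation: extract a compact set $K\subseteq S$ with $m(K)>m(S)-\varepsilon$ on which $\gamma$ and its candidate derivative $\gamma'$ satisfy the hypotheses of Corollary~\ref{EngelWhitney}, and then take $\Gamma$ to be the $C^1_H$ extension that corollary provides (restricted to $[a,b]$); since $\Gamma|_K=\gamma|_K$, the conclusion follows.

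The input from differentiation theory is that at a.e.\ $\tau\in[a,b]$ the derivative $\gamma'(\tau)\in V_1$ exists and
\[
\lim_{\eta\downarrow 0}\ \sup_{\substack{t\in[a,b]\\ 0<|t-\tau|<\eta}} \frac{d\big(\gamma(t),\,\gamma(\tau)\exp((t-\tau)\gamma'(\tau))\big)}{|t-\tau|}=0,
\]
i.e.\ $\gamma$ is Pansu differentiable at $\tau$ with Pansu derivative $h\mapsto\exp(h\gamma'(\tau))$. One obtains this from Pansu's theorem (Theorem~\ref{pansu}): reparametrizing $\gamma$ by $s(t)=(t-a)+\int_a^t\|\gamma'(\sigma)\|\,d\sigma$ turns it into a $1$-Lipschitz curve into $(\mathbb{E},d)$ whose inverse reparametrization is also $1$-Lipschitz, so the set where the reparametrized curve fails to be Pansu differentiable pulls back to a null set and the chain rule identifies the derivative; the portions where the control vanishes are handled directly via the Lebesgue differentiation theorem applied to $\|\gamma'\|$. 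Let $E\subseteq[a,b]$ be the full-measure set where the display holds, and for $\tau\in E$ and $\eta>0$ put
\[
\phi_\eta(\tau):=\sup_{\substack{t\in[a,b]\\ 0<|t-\tau|<\eta}} \frac{d\big(\gamma(t),\,\gamma(\tau)\exp((t-\tau)\gamma'(\tau))\big)}{|t-\tau|}.
\]
Since $\gamma$ is continuous the supremum may be taken over rational $t$, so each $\phi_\eta$ is measurable; moreover $\phi_\eta(\tau)$ is non-increasing in $\eta$ and $\phi_\eta(\tau)\to0$ as $\eta\downarrow0$ for every $\tau\in E$.

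Now fix $\varepsilon>0$. Since $m(S)<\infty$, Lusin's theorem applied to $\gamma'$ on $S\cap E$ yields a compact $K_1\subseteq S\cap E$ with $m(S\setminus K_1)<\varepsilon/2$ and $\gamma'|_{K_1}$ continuous; Egorov's theorem applied to $\phi_{1/n}\to0$ on $K_1$ together with inner regularity of Lebesgue measure then yields a compact $K\subseteq K_1$ with $m(K_1\setminus K)<\varepsilon/2$ on which $\sup_{\tau\in K}\phi_{1/n}(\tau)\to0$, and by monotonicity of $\phi_\eta$ in $\eta$ this gives $\sup_{\tau\in K}\phi_\eta(\tau)\to0$ as $\eta\downarrow0$. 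Then $m(K)>m(S)-\varepsilon$. Set $X:=\gamma'|_K$, a continuous map; since $K\subseteq S$ it takes values in $V_1\setminus\mathrm{Span}(X_2)$, every vector of which is pliable by Theorem~\ref{t-engel}. Finally, for $\tau,t\in K$ with $0<|\tau-t|<\eta$ the quotient $d(\gamma(t),\gamma(\tau)\exp((t-\tau)X(\tau)))/|\tau-t|$ is at most $\phi_\eta(\tau)$, so the quantity $r_{K,\eta}$ of Definition~\ref{directionalWhitney} satisfies $r_{K,\eta}\le\sup_{\tau\in K}\phi_\eta(\tau)\to0$. Corollary~\ref{EngelWhitney} now produces $\Gamma\in C^1_H(\mathbb{R},\mathbb{E})$ with $\Gamma|_K=\gamma|_K$; its restriction to $[a,b]$ is a $C^1$ horizontal curve with $\{t\in[a,b]:\Gamma(t)=\gamma(t)\}\supseteq K$, hence of measure $>m(S)-\varepsilon$.

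The step I expect to be the main obstacle is the first one: carefully justifying the a.e.\ pointwise horizontal (Pansu) differentiability of an \emph{absolutely continuous} horizontal curve, including the behavior on the set where the control vanishes, the pull-back of the null set under the reparametrization, and the measurability of $\phi_\eta$. After that, everything is a routine Lusin--Egorov extraction feeding into Corollary~\ref{EngelWhitney}; the only point needing a little care there is that $r_{K,\eta}$ is a supremum over \emph{pairs} of points of $K$, which is dominated by $\sup_{\tau\in K}\phi_\eta(\tau)$ precisely because $\phi_\eta$ already supremizes over all $t\in[a,b]\supseteq K$.
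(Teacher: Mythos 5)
Your proposal is correct and follows essentially the same route as the paper: establish a.e.\ pointwise Pansu differentiability of the absolutely continuous horizontal curve by reparametrizing it to a Lipschitz curve and invoking Pansu's theorem, then extract a compact $K\subset S$ by a Lusin/Egorov argument on which the difference quotients converge uniformly and $\gamma'$ is continuous, and finally apply Corollary~\ref{EngelWhitney}. The only difference is a minor technical one: the paper factors $\gamma=\phi\circ F$ through an arc-length parametrization with nondecreasing $F$ (splitting into the sets where $F'=0$ and $F'>0$ and pulling back the null set via a change of variables), whereas you use the strictly increasing reparametrization $s(t)=(t-a)+\int_a^t\|\gamma'\|$, whose $1$-Lipschitz inverse makes the null-set pull-back immediate.
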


\begin{proof}


Recall that horizontal curves are, by definition, absolutely continuous. By choosing an arc length parameterization \cite[Lemma 1.1.4]{AGS08}, there is $T>0$, a Lipschitz curve $\phi\colon [0,T]\to \bbG$ (using the CC distance in $\bbG$), and an absolutely continuous, nondecreasing function $F\colon [a,b]\to [0,T]$ such that $\gamma=\phi \circ F$. Let 
\[P:=\{t\in [a,b]:F'(t) \mbox{ exists and is strictly positive}\},\]
\[Z:=\{t\in [a,b]:F'(t)\mbox{ exists and is zero}\}.\]

\medskip

\noindent \textbf{Claim:} The map $\gamma$ is Pansu differentiable almost everywhere with derivative $\gamma'$ given by $\gamma'(t)=0$ for every $t\in Z$ and $\gamma'(t)=\phi'(F(t))F'(t)$ which is horizontal for almost every $t\in P$. 

\medskip

\emph{Proof of Claim:} First suppose $t\in Z$. Then
\[ \frac{d( \phi(F(s)),\phi(F(t)))}{|s-t|}\leq \mathrm{Lip}(\phi)\frac{|F(s)-F(t)|}{|s-t|}\to 0 \mbox{ as }s\to t,\]
which shows $\gamma$ is Pansu differentiable at $t$ with derivative $0$.

We next show $\phi$ is differentiable at $F(t)$ for almost every $t\in P$. Let $D\subset [0,T]$ be the set of points where $\phi$ is not Pansu differentiable or $\phi' \notin V_1$.
Then $m(D)=0$ by Pansu's theorem (Theorem~\ref{pansu}) which also implies that $D$ is Lebesgue measurable. By a change of variables formula \cite[Special Case of 7.2.6 Theorem]{Rud87}, we have
\[ 0 = m(D) = \int_{0}^{T}\chi_{D} = \int_{a}^{b}(\chi_{D}\circ F)F'.\]
It follows that $(\chi_{D}\circ F)F'=0$ almost everywhere in $[a,b]$. Hence $\chi_{D}\circ F=\chi_{F^{-1}(D)}$ is zero at almost every point of $P$, which implies $m(P\cap F^{-1}(D))=0$. That is, $\phi$ is Pansu differentiable at $F(t)$ and $\phi'(F(t)) \in V_1$ for almost every $t\in P$.

Now fix $t\in P$ such that $F$ is differentiable at $t$ and $\phi$ is Pansu differentiable at $F(t)$ with derivative $\phi'(F(t)) \in V_1$. Since also $F$ is continuous, it follows that
\[ \lim_{s\to t} \frac{d(\phi(F(s)),\phi(F(t))\exp((F(s)-F(t))\phi'(F(t))))}{|F(s)-F(t)|}=0.\]
Since $F$ is differentiable at $t$, we obtain
\begin{align*}
 &\lim_{s\to t} \frac{d(\phi(F(s)),\phi(F(t))\exp((F(s)-F(t))\phi'(F(t))))}{|s-t|}\\
 &\qquad =\lim_{s\to t} \frac{d(\phi(F(s)),\phi(F(t))\exp((F(s)-F(t))\phi'(F(t))))}{|F(s)-F(t)|}\frac{|F(s)-F(t)|}{|s-t|}\\
 &\qquad =0.
 \end{align*}
 To prove the claim, it suffices to show
 \begin{equation}
 \label{e-laststep}    
 \frac{d(\phi(F(t))\exp((F(s)-F(t))\phi'(F(t))),\phi(F(t))\exp((s-t)F'(t)\phi'(F(t)))     )}{|s-t|}
 \end{equation}
 vanishes as $s \to t$.
 It is not difficult to show that, for any $V \in V_1$ and $\lambda_1,\lambda_2 \in \mathbb{R}$, we have $d(\exp(\lambda_{1}V),\exp(\lambda_{2} V))=c(V)|\lambda_{1}-\lambda_{2}|$ where $c(V)$ is a real number depending on $V$ but not $\lambda_{1}$ or $\lambda_{2}$. This is indeed the case since $\exp(-\lambda_{1}V)\exp(\lambda_{2}V)=\exp((\lambda_{2}-\lambda_{1})V)$ using the BCH formula, and so (assuming $\lambda_{1}\leq \lambda_{2}$, otherwise a similar argument applies) $t\mapsto \exp(tV)$ for $0\leq t\leq \lambda_{2}-\lambda_{1}$ is a curve joining $0$ to $\exp((\lambda_{2}-\lambda_{1})V)$ with length $c(V)(\lambda_2-\lambda_1)$. The result then follows by combining this fact with the left invariance of $d$ to bound \eqref{e-laststep} by the following:
 \begin{align*}
 &\lim_{s\to t}\frac{d(\exp((F(s)-F(t))\phi'(F(t))),\exp((s-t)F'(t)\phi'(F(t))) )}{|s-t|}\\
 &\qquad =c(\phi'(F(t)))\lim_{s\to t}\frac{|F(s)-F(t)-(s-t)F'(t)|}{|s-t|}=0.
 \end{align*}

\medskip

 \emph{Proof of Theorem from Claim:} For almost every $t\in [a,b]$ we have
 \[ \lim_{s\to t} \frac{d(\gamma(s),\gamma(t)\exp((s-t)\gamma'(t)))}{|s-t|}=0.\]
Since $t\mapsto \gamma'(t)$ is measurable, an elementary measure theory argument gives for any $\varepsilon>0$ a compact set $K\subset S$ with $m(S\setminus K)<\varepsilon$ such that 
\[ \lim_{\eta \downarrow 0} \sup_{\substack{s,t\in K\\ 0<|s-t|<\eta}} \frac{d(\gamma(s),\gamma(t)\exp((s-t)\gamma'(t)))}{|s-t|}=0\]
and $\gamma'$ is continuous on $K$. By definition of $S$, it follows $\gamma'(t)\notin \mathrm{Span}(X_{2})$ for all $t\in K$. Hence the pair $(\gamma|_{K}, \gamma'|_{K})$ satisfies the hypotheses of the $C^{1}_H$ Whitney extension theorem on $\mathfrak{O}:=V_{1}\setminus \mathrm{Span}(X_{2})$ (Definition \ref{directionalWhitney}). Hence by Corollary~\ref{EngelWhitney}, $\gamma|_{K}$ extends to a $C^{1}$ horizontal curve which agrees with $\gamma$ on $K$. Since $m(S\setminus K)<\varepsilon$, this gives the required approximation.
\end{proof}

We are now ready to prove Theorem \ref{answerisno}, our second main result.

\begin{theorem*}[Restatement of Theorem \ref{answerisno}]
Let $\gamma\colon [a,b]\to \mathbb{R}^{4}$ be a horizonal curve in the Engel group. Then there exists a $C^{1}$ horizontal curve $\Gamma\colon [a,b]\to \mathbb{R}^{4}$ such that
\[m\{t\in [a,b]:\Gamma(t)=\gamma(t)\}>0.\]
\end{theorem*}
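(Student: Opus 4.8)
The plan is to split into two cases according to whether the set
$S=\{t\in[a,b]:\gamma'(t)\notin\mathrm{Span}(X_2)\}$ has positive measure. If $m(S)>0$, then Theorem~\ref{engellusin} applies directly: choosing $\varepsilon=m(S)/2$ produces a $C^1$ horizontal curve $\Gamma$ with $m\{t\in[a,b]:\Gamma(t)=\gamma(t)\}>m(S)-m(S)/2=m(S)/2>0$, which is the desired conclusion. So all the work is in the complementary case $m(S)=0$, i.e. $\gamma'(t)\in\mathrm{Span}(X_2)$ for almost every $t$.

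In that case I would first unwind what the hypothesis says about the coordinates of $\gamma$. Since $\gamma'(t)=\dot\gamma_1(t)X_1+\dot\gamma_2(t)X_2$, the condition $\gamma'(t)\in\mathrm{Span}(X_2)$ a.e. forces $\dot\gamma_1=0$ a.e., and since $\gamma_1$ is absolutely continuous it is therefore constant, say $\gamma_1\equiv c$. Substituting this into the horizontality equations of Lemma~\ref{Engellift} and integrating, $\gamma_3$ and $\gamma_4$ become affine functions of $\gamma_2$: $\gamma_3(t)=\gamma_3(a)+c(\gamma_2(t)-\gamma_2(a))$ and $\gamma_4(t)=\gamma_4(a)+\tfrac12c^2(\gamma_2(t)-\gamma_2(a))$. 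Thus $\gamma$ is completely determined, given its initial point, by the single absolutely continuous scalar function $\gamma_2$.

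Next I would fix $0<\varepsilon<b-a$ and invoke the classical one–dimensional Lusin approximation of absolutely continuous functions by $C^1$ functions (see, e.g., \cite{Fed69, LT94}) to obtain a $C^1$ map $g\colon[a,b]\to\mathbb{R}$ with $m\{t\in[a,b]:g(t)\neq\gamma_2(t)\}<\varepsilon$. Then define $\Gamma=(\Gamma_1,\Gamma_2,\Gamma_3,\Gamma_4)$ by $\Gamma_1\equiv c$, $\Gamma_2=g$, $\Gamma_3(t)=\gamma_3(a)+c(g(t)-\gamma_2(a))$, and $\Gamma_4(t)=\gamma_4(a)+\tfrac12c^2(g(t)-\gamma_2(a))$. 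Since $g\in C^1$, the curve $\Gamma$ is $C^1$; and $\dot\Gamma_3=cg'=\Gamma_1\dot\Gamma_2$ and $\dot\Gamma_4=\tfrac12c^2g'=\tfrac12\Gamma_1^2\dot\Gamma_2$, so $\Gamma$ is horizontal by Lemma~\ref{Engellift}. On the set $E=\{t\in[a,b]:g(t)=\gamma_2(t)\}$, which has measure at least $b-a-\varepsilon>0$, all four coordinates of $\Gamma$ agree with those of $\gamma$ by the affine formulas, so $\Gamma=\gamma$ on $E$, completing the proof.

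I do not expect a serious obstacle here: the only non-elementary ingredient is the classical $C^1$ Lusin approximation for absolutely continuous real functions, and everything else is the structural reduction above plus a one-line verification of horizontality. The one point that requires a moment's care is that plugging the $C^1$ approximation $g$ of $\gamma_2$ into the affine formulas yields a curve that is not merely $C^1$ but genuinely horizontal — this works precisely because, once the first coordinate is the constant $c$, horizontality in $\mathbb{E}$ collapses to the two identities $\dot\Gamma_3=c\dot\Gamma_2$ and $\dot\Gamma_4=\tfrac12c^2\dot\Gamma_2$, and these are manifestly preserved when one replaces $\gamma_2$ by $g$.
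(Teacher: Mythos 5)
Your proposal is correct and follows essentially the same route as the paper: the same dichotomy on $m(S)$, with Theorem \ref{engellusin} disposing of the case $m(S)>0$, and a reduction to a one-dimensional problem in the second coordinate when $m(S)=0$. The only difference is cosmetic: the paper left-translates so that $\gamma=(0,\gamma_2,0,0)$ and rebuilds the $C^1$ approximation of $\gamma_2$ by hand via the classical Whitney extension theorem, whereas you keep the constant first coordinate $c$, carry the affine formulas for $\gamma_3,\gamma_4$, and invoke the classical Lusin approximation of absolutely continuous functions by $C^1$ functions as a black box; both are equally valid.
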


\begin{proof}
If $m\{t\in [a,b]:\gamma'(t)\notin \mathrm{Span}(X_{2}) \} >0$, then the result follows by Theorem \ref{engellusin} with any choice of sufficiently small $\varepsilon$. 

Suppose this set has zero measure. Then there exists a Borel measurable function $c\colon [a,b]\to \mathbb{R}$ such that $\dot{\gamma}(t)=c(t)X_{2}(\gamma(t))$ for almost every $t\in [a,b]$. Using left translations we may assume $\gamma(a)=0$. Working in second exponential coordinates, by Lemma \ref{Engellift} we have $\dot{\gamma}(t)=(0,c(t),\gamma_{1}(t)c(t),\frac{1}{2}(\gamma_{1}(t))^{2} c(t))$ for almost every $t\in [a,b]$. Examining the first component, $\dot{\gamma}_{1}(t)=0$ for almost every $t\in [a,b]$, so, by absolute continuity, $\gamma_{1}(t)=0$ for every $t\in [a,b]$. A similar argument with the other components yields $\gamma(t)=(0,\gamma_{2}(t),0,0)$ for every $t\in [a,b]$, where $\dot{\gamma}_{2}(t)=c(t)$ for almost every $t\in [a,b]$. 

Focusing on the second component, we have
\[\lim_{s\to t}\frac{\gamma_{2}(s)-\gamma_{2}(t)-(s-t)\dot{\gamma}_{2}(t)}{|s-t|} =0\]
for almost every $t\in [a,b]$. By discarding a set of arbitrarily small Lebesgue measure and using Lusin's theorem and elementary measure theory, we find a compact set $K\subset [a,b]$ of strictly positive measure such that $\gamma_{2}|_{K}$, $\dot{\gamma}_{2}|_{K}$ are uniformly continuous and
\[ \lim_{\delta \downarrow 0} \sup_{ \substack{0<|s-t|<\delta \\ s,t\in K}} \frac{\gamma_{2}(s)-\gamma_{2}(t)-(s-t)\dot{\gamma}_{2}(t)}{|s-t|}=0.\]
These are the hypotheses of the classical Whitney extension theorem for $C^{1}$ extension from $K$. Therefore, there exists a $C^{1}$ function $W\colon [a,b]\to \mathbb{R}$ that agrees with $\gamma_{2}$ on $K$. However, this implies that the curve $t\mapsto \Gamma(t):=(0,W(t),0,0)$ is a $C^{1}$ horizontal curve that agrees with $\gamma$ on $K$. Since $K$ has strictly positive measure, the result follows.
\end{proof}

\section{Appendix}

In this appendix we briefly derive the expressions for the horizontal vector fields in the Engel group $\mathbb{E}$. These were stated in \eqref{engelhorizontalvectors} and used throughout Section \ref{sec-engel}. They seem to be well known in the literature, but we were unable to find an explicit justification.

Recall that if $X,Y$ are elements of the Lie algebra associated to a Lie group, then the Baker Campbell Hausdorff formula (BCH formula) \cite{BLU07} gives an expression for $Z$ such that $\exp(X)\exp(Y)=\exp(Z)$. In a step 3 Carnot group such as the Engel group, the BCH formula reduces to
\[ Z=X+Y+\frac{1}{2}[X,Y]+\frac{1}{12}[X,[X,Y]]-\frac{1}{12}[Y,[X,Y]].\]
Recall the definition of $\mathbb{E}$ from Definition \ref{Engeldef}. In second exponential coordinates, $\mathbb{E}$ is identified with $\mathbb{R}^{4}$ via a diffeomorphism $\psi:\mathbb{E}\to \mathbb{R}^{4}$ defined as follows: if $x=\exp(x_{4}X_{4})\exp(x_{3}X_{3})\exp(x_{2}X_{2})\exp(x_{1}X_{1}) \in \mathbb{E}$, then $\psi(x):=(x_{1},x_{2},x_{3},x_{4})\in \mathbb{R}^{4}$.

For $i=1, 2$ let $Z_{i}:=d\psi(X_{i})$ be the push forward of $X_{i}$ into $T_0 \mathbb{R}^{4}$ 
via $\psi$. We show how to compute the formula for each $Z_{i}$. Fix a point $(x_{1},x_{2},x_{3},x_{4})=\psi(x)\in \bbR^{4}$. Then $Z_{i}(\psi(x))=\frac{d}{dt}\psi(x\exp(tX_{i}))\big|_{t=0}$. Here we recall that $t\mapsto x\exp(tX_{i})$ is a curve through $x$ with derivative $X_{i}(x)$ at $t=0$. To compute this we divide into cases.

If $i=1$, 
\begin{align*}
x\exp(tX_{1})&= \exp(x_{4}X_{4})\exp(x_{3}X_{3})\exp(x_{2}X_{2})\exp(x_{1}X_{1})\exp(tX_{1})\\
&=\exp(x_{4}X_{4})\exp(x_{3}X_{3})\exp(x_{2}X_{2})\exp((x_{1}+t)X_{1}).
\end{align*}
Hence $\psi(x\exp(tX_{1}))=(x_{1}+t,x_{2},x_{3},x_{4})$. Differentiating with respect to $t$ yields $Z_{1}(\psi(x))=(1,0,0,0)$.

If $i=2$, a short calculation with the BCH formula gives
\begin{align}
x\exp(tX_{2})&=(\exp(x_{4}X_{4})\exp(x_{3}X_{3})\exp(x_{2}X_{2}))(\exp(x_{1}X_{1})\exp(tX_{2})) \nonumber \\
& = \exp(x_2X_2+x_3X_3+x_4X_4) \exp(x_1X_1 + x_2X_2 + \tfrac{tx_1}{2}X_3 + \tfrac{tx_1^2}{12}X_4) \nonumber \\
& = \exp \Big( x_{1}X_{1}+(x_{2}+t)X_{2}+(x_{3}+\frac{tx_{1}}{2}-\frac{x_{1}x_{2}}{2})X_{3} \nonumber \\
&\qquad \qquad \qquad \qquad \qquad \qquad \quad  + (x_{4}+\frac{tx_{1}^{2}}{12}+\frac{x_{1}^{2}x_{2}}{12}-\frac{x_{1}x_{3}}{2})X_{4} \Big). \label{fornextstep}
\end{align}
To compute $\psi(x\exp(tX_{2})) = (y_1,y_2,y_3,y_4)$, we must first rewrite the expression for $\exp(tX_{2})$ in the form $\exp(y_{4}X_{4})\exp(y_{3}X_{3})\exp(y_{2}X_{2})\exp(y_{1}X_{1})$. To do so first notice
\begin{align}    
\exp(y_{4}X_{4})&\exp(y_{3}X_{3})\exp(y_{2}X_{2})\exp(y_{1}X_{1}) \nonumber \\
& =  \exp\left( y_{1}X_{1}+y_{2}X_{2}+(y_{3}-\frac{y_{1}y_{2}}{2})X_{3}  + (x_{4}+\frac{y_{1}^{2}y_{2}}{12}-\frac{y_{1}y_{3}}{2})X_{4}\right).\label{nextstep}
\end{align}
This follows immediately from \eqref{fornextstep} by replacing $t$ with $0$ and $x$ with $y$. Since $\exp$ is a diffeomorphism, we can equate coefficients in \eqref{fornextstep} and \eqref{nextstep} to conclude that $y_{1}=x_{1}$, $y_{2}=x_{2}+t$, $y_{3}=x_{3}+tx_{1}$, and $y_{4}=x_{4}+\frac{tx_{1}^{2}}{2}$. Hence $\psi(x\exp(tX_{2}))=(x_{1},x_{2}+t,x_{3}+tx_{1},x_{4}+\frac{tx_{1}^{2}}{2})$. Differentiating gives $Z_{2}(\psi(x))=(0,1,x_{1},\frac{x_{1}^{2}}{2})$, and this verifies \eqref{engelhorizontalvectors}.

\end{document}